\date{\today}
\theoremstyle{plain}
\newtheorem{teo}{Theorem}[section]
\newtheorem{lema}[teo]{Lemma}
\newtheorem{prop}[teo]{Proposition}
\newtheorem{coro}[teo]{Corollary}
\theoremstyle{definition}
\newtheorem{defi}[teo]{Definition}
\newtheorem{obs}[teo]{Remark}
\newtheorem{ejem}[teo]{Example}
\def\Ext{\mathop{\rm Ext}\nolimits}
\def\Tor{\mathop{\rm Tor}\nolimits}
\def\Ho{\mathop{\rm Hom}\nolimits}
\def\Mod{\mathop{\rm Mod}\nolimits}
\def\sep{ $RG$ is a separable extension over $RH$}
\def\pdH{\mathop{\rm pd}_{RH}\nolimits}
\def\pdG{\mathop{\rm pd}_{RG}\nolimits}
\def\pdR{\mathop{\rm pd}_{R}\nolimits}
\def\wdH{\mathop{\rm wd}_{RH}\nolimits}
\def\wdG{\mathop{\rm wd}_{RG}\nolimits}
\def\wdR{\mathop{\rm wd}_{R}\nolimits}
\renewcommand{\d}{\downarrow _H^G}
\renewcommand{\u}{\uparrow _H^G}
\def\nid{\mathop{\rm (n,0)\text{-}id}\nolimits}
\def\1id{\mathop{\rm (1,0)\text{-}id}\nolimits}
\def\nfd{\mathop{\rm (n,0)\text{-}fd}\nolimits}
\def\ncd{\mathop{\rm nCd}\nolimits}
\def\cfd{\mathop{\rm cfd}\nolimits}
\def\cid{\mathop{\rm cid}\nolimits}
\def\Gfd{\mathop{\rm Gfd}\nolimits}
\def\nctd{\mathop{\rm nCtd}\nolimits}
\def\cfD{\mathop{\rm cfD}\nolimits} 
\def\gid{\mathop{\rm (n,0)\text{-}ID}\nolimits}
\def\gfd{\mathop{\rm (n,0)\text{-}FD}\nolimits}
\def\gcd{\mathop{\rm nC\text{-}gldim}\nolimits}
\def\wGgldim{\mathop{\rm wGgldim}\nolimits}
\def\gnctd{\mathop{\rm nCt\text{-}gldim}\nolimits}
\title[Finiteness properties and homological dimensions of Skew Group rings ]{Finiteness properties and homological dimensions of Skew Group rings}
\author[V. Gubitosi]{Viviana Gubitosi}
\address{V. Gubitosi \newline Instituto de Matem\'{a}tica y Estad\'{\i}stica Rafael Laguardia, Facultad de Ingenier\'{\i}a - UdelaR, Montevideo, Uruguay, 11200 }
\email{gubitosi@fing.edu.uy}
\author[R. Parra]{Rafael Parra}
\address{R. Parra \newline Instituto de Matem\'{a}tica y Estad\'{\i}stica Rafael Laguardia, Facultad de Ingenier\'{\i}a - UdelaR, Montevideo, Uruguay, 11200 }
\email{rparra@fing.edu.uy}
\keywords{skew group rings, separable extensions, finitely $n$-presented modules}
\begin{document}
\maketitle

\begin{abstract}
 Let $G$ be a finite group acting on a ring $R$ and $H$ a subgroup of $G$. In this paper we compare some homological dimensions over the skew group rings $RG$ and $RH$. Moreover, under the assumption that \sep ,   we show that the skew group rings $RG$ and $RH$ share some properties such as being $n$-Gorenstein, $n$-perfect, $n$-coherent, $(n,d)$, Ding-Chen or IF-rings.
\end{abstract}


\section*{Introduction}

Let $R$ be an associative ring with identity  and $G$ be a group acting on $R$. The skew group ring $RG$ is a free left $R$-module with basis $G$. That is, the elements of $RG$ are finite linear combinations $\sum _{g\in{G}}r_{g}g$, with $r_{g}\in R$ where the addition operation is component-wise and the multiplication is given by $(rg)(sh)=rg(s)gh$ for $r,s \in {R}$ and $g,h\in G$. When the action is trivial the skew group ring is just the classical group ring denoted by $R\left[{G}\right]$.

Understanding the ring extensions are one of the basic problems in the research of  ring and  module categories. The separable extensions are an important example of  ring extensions, which include finite matrix rings $\mathcal{M}_{n}(R)$ and the skew group ring $RG$, if the order of $G$ is invertible in $R$.


The relationship between the structure of  $R$, $R\left[{G}\right]$ and $RG$  has been intensively studied by many authors. Unfortunately, the usual group rings techniques fail for skew group rings. The classical question of which properties of rings and groups extend to properties on the skew group ring has been a fruitful research direction. For example von Neumann regularity of the group ring $R\left[{G}\right]$ was investigated by Auslander \cite{A},  Connell \cite{Conell}  and Villamayor \cite{V}. Later von Neumann regularity of the skew group ring $RG$ was studied  by Alfaro, Ara and del R\'io \cite{AAdR}.
In \cite{Conell}  Connell has shown that the ordinary group ring
$R[G]$ is right Artinian if and only if $R$ is right Artinian and $G$ is finite.
Moreover, Renault \cite{Re} and  Woods \cite{Woods} have shown that the ordinary group
ring $R[G]$ is right perfect if and only if $R$ is right perfect and $G$ is finite.
Later, Park \cite{Park} has shown that a skew group ring is right
Artinian (resp., semiprimary, right perfect) if and only if $R$ is
right Artinian (resp., semiprimary, right perfect) and the group is finite. For commutative $R$,  Gillespie has shown that if $R$ is a Ding-Chen ring, then $R\left[{G}\right]$ is a Ding-Chen ring \cite{Gillespie}.  Many homological dimensions such as the weak global dimension, cotorsion dimension and weak Gorenstein global dimension of skew group  rings have been studied by Xiang \cite{Xiang}.


Given a ring $R$, a group $G$ acting on $R$ and $H$ a subgroup of $G$; the main goal of this paper  is to see which properties are common to the skew group rings $RG$ and $RH$. This paper is organized as follows. After a preliminary section,
in which we fix the notations and recall some concepts needed
later,  section 2 is devoted to study a special class of rings,
called $n$-perfect rings. In this section we describe the
induction-restriction procedure, an essential tool  that we will
use through all the paper.
Sections 3 and 4  are  dedicated to recall the definition of  finitely $n$-presented modules and some relative modules and dimensions. The notion of finitely $n$-presented modules was introduced by Costa in \cite{Costa} and  many module theoretical properties can be described in terms of this class of modules.  For example, the concepts of $n$-coherent ring or $(n,d)$-ring can be stated in terms of finitely $n$-presented modules. In section 3 we show that if $RG$ is a left $(n,d)$-ring (or $n$-coherent), so is $RH$, and with the additional condition of $RG$ being a separable extension over $RH$, the converse holds. In section 4 we work with finiteness conditions over  $RH$ and $RG$-modules. We conclude this section showing that $RG$ is a Ding-Chen ring if and only if so is $R$.
In section 5 we apply the previous results to investigate several homological global dimensions of skew group rings such as $(n,0)$-injective global dimension, $(n,0)$-flat global dimension and $n$-cospiral global dimension.
In section 6, $n$-cotorsion modules and $n$-cotorsion global dimension are defined and used in order to study semisimple Artinian  skew group rings and regular coherent skew group rings. Section 7 deals with another related dimensions called the copure flat  and the copure injective dimension. Using the copure flat dimension we study $n$-Gorenstein skew group rings.  Finally, in section 8, some applications to smash products, K-theory and cotorsion pairs are indicated.


\section{ Definitions and Preliminaries}

\subsection{Notation}

Throughout this paper, $R$ is an associative ring with identity and all modules considered are unitary. If not specified otherwise, all  modules are left modules and all groups are finite groups. The projective dimension of an $R$-module $M$ is denoted by $\pdR(M)$ and  the weak dimension of $M$ is denoted by $\wdR(M)$.

\subsection{Skew group rings}
Let $R$ be a ring and $G$ be a group. We say that $G$ acts on $R$ if there is a function $\sigma: G \times R \rightarrow R$ such that $\sigma(g,r)=g(r)$, for any $g\in G$ and $r\in R$, satisfying:

\begin{itemize}
\item [(1)] For each $g\in G$, the map $r\mapsto g(r)$ is
an  automorphism of $R$.

\item [(2)]$(g_1g_2)(r)=g_1(g_2(r))$ for all $g_1,g_2\in
G$ and $r\in R$.

\item [(3)]If $1$ is the identity of $G$, $1(r)=r$ for all $r\in
R$.

\end{itemize}
Such an action induces an action of $G$ on $R\text{-}Mod$ as follows.
Let $M$ be an $R$-module, and $g\in G$. We define
$^{g}M$ to be the $R$-module with the additive structure
of $M$ but where the multiplication is given by
$r.x=g^{-1}(r)x$, for $r\in R$ and $x\in M$. Moreover,
given a morphism of $R$-modules $f:M\rightarrow N$,  we define
${}^{g}f:{}^{g}M \rightarrow {}^{g}N$ by
${}^{g}f(m)=f(m)$ for each $m\in {}^{g}M$. \\

If $G$ is a group acting on $R$ the skew group ring, denoted by $RG$, is defined by
$$RG=\{ \sum_{g\in G} r_gg \mid r_g\in R\}$$
where the addition is natural and multiplication is given by $(ag)(bh)=ag(b)gh$ for $a,b\in R$ and $g,h \in G$. If $H$ is a subgroup of $G$, the ring $RH$ is a subring of $RG$.

Skew group rings include ordinary group rings as special examples when the action of $G$ on $R$ is trivial.

\subsection{Separable extensions}

Let $R$ be a ring and $S$ be a subring of $R$. According to \cite{HS}, $R$ is a \textit{separable extension} over $S$ if the multiplication epimorphism $R \otimes_S R \to R$  sending $a \otimes b$ to $ab$  splits. In other words, there is a certain $\sum _{i = 1}^n  a_i \otimes b_i \in R \otimes_S R$ such that $\sum _{i = 1}^n  a_i \otimes b_i r = \sum _{i = 1}^n  r a_i \otimes b_i$ for every $r \in R$ and $\sum _{i=1}^n a_ib_i = 1$.\\

Classic examples of separable extensions over a commutative ring $R$ are the $n$-fold product  $R^{n}=R\times\cdots \times R$, and the matrix ring $\mathcal{M}_{n}(R)$ of all $n\times{n}$ matrices over $R$. Let $S$ be a subring of $R$ such that $R \otimes_S R$ is isomorphic as $R\text{-}R$-bimodule to a direct summand of $R^{n}$ (for some positive integer $n$) then  $R$ is a separable extension, known as H-separable extension, over $S$.  These extensions were introduced by Hirata in \cite{Hirata}.

If $G$ is a group acting on a commutative ring $R$ and $H$ is a subgroup of $G$ with finite index $n$ and such that $nR=R$, then by \cite{HS} the group ring $R\left[{G}\right]$ is a separable extension of  the group ring $R\left[{H}\right]$.
If $G$ is a group acting  faithfully as automorphisms of $R$, then the skew group ring $RG$ is a separable extension of $R$ if and only if there exists a central element in $R$ with trace one. Moreover, if $H$ is a subgroup of $G$ such that the index $[G:H]$ is invertible in $R$, by \cite[Proposition 3.6]{Li},  we have that  $RG$ is a separable extension over $RH$. In particular, if the order of $G$ is invertible in $R$, the skew group ring $RG$ is a separable extension over $R$.


\begin{ejem}
  Let $k$ be an algebraically closed field  with characteristic $2$ and let $A$ be the path $k$-algebra of the following quiver with relations $\beta\gamma=\gamma\beta=0$.

  $$\xymatrix{x \ar@<0.6ex>^{\beta}[r] & y \ar@<0.6ex>^{\gamma}[l]}$$

  Let $G$ be a cyclic group of order $2$ generated by $g$, which permutes vertices $x$ and $y$, and arrows $\beta$ and $\gamma$. This action determine a skew group algebra $AG$ which is not a separable extension over $A$. See \cite[Example 4.5]{Li}.
\end{ejem}

\subsection{ Induction and restriction functors}

Let $H$ be a subgroup of a group $G$ acting on a ring $R$, then $RH$ is a subring of $RG$. As for group
algebras, we have the induction functor $RG \otimes_{RH}-: RH\text{-}\Mod \rightarrow RG\text{-}\Mod$ and the restriction functor $RG \otimes_{RG}-: RG\text{-}\Mod \rightarrow RH\text{-}\Mod$. Explicitly, for a $RH$-module $V$ , the
induced module is $V \uparrow _H^G = RG \otimes_{RH} V$, where $RG$ acts on the left side. Every $RG$-module
$M$ can be viewed as a  $RH$-module by restricting scalars. We denote this restricted module by
$M\downarrow _H^G$. Observe that $RG$ is a left and right  free $RH$-module. Therefore, these two functors are
exact, and preserve projective modules. Moreover, by \cite[Proposition 2.2 and Remark 2.4]{Xiang} they also preserve flat and injective modules.\\

In addition, for every $RH$-module $N$ we have the co-induced $RG$-module $\Ho_{RH}(RG,N)$. We denote this co-induced module by
$N\Uparrow _H^G$. By \cite[ Proposition III.4.14]{ARS}, for every $RH$-module $N$, $N\Uparrow _H^G\cong N\uparrow _H^G$. In consequence, we can rewrite the Eckmann-Shapiro Lemma in terms of the induction and restriction functors. See \cite{Benson}.

\begin{lema}\label{E-S lemma}(Eckmann-Shapiro Lemma) If $M$ is a $RG$-module  and $N$ is a $RH$-module, then

\begin{enumerate}
  \item $\Ext_{RH}^n(N , M\d)\cong \Ext_{RG}^n(N\u , M)$
  \item $\Ext_{RH}^n(M\d , N)\cong \Ext_{RG}^n(M , N\u)$

\end{enumerate}
\end{lema}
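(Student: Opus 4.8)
The plan is to prove the Eckmann--Shapiro Lemma by reducing it to a standard homological fact about adjoint functors. Recall from the excerpt that $RG$ is free as a left and as a right $RH$-module, so the induction functor $-\u = RG\otimes_{RH}-$ and the restriction functor $-\d$ are both exact and preserve projectives. The key structural input is the isomorphism $N\Uparrow_H^G\cong N\u$ cited from \cite[Proposition III.4.14]{ARS}, which says that the \emph{left} adjoint $RG\otimes_{RH}-$ and the \emph{right} adjoint $\Ho_{RH}(RG,-)$ of restriction coincide on $RH$-modules. Thus induction is simultaneously left and right adjoint to restriction, which is exactly what makes both statements come out symmetrically.

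For part (2), I would argue as follows. Take a projective resolution $P_\bullet \to M$ of $M$ as an $RG$-module. Since $-\d$ is exact and sends projectives to projectives (as $RG$ is free over $RH$), $P_\bullet\d \to M\d$ is a projective resolution of $M\d$ over $RH$. Hence $\Ext_{RH}^n(M\d,N)$ is the $n$-th cohomology of $\Ho_{RH}(P_\bullet\d, N)$. Now apply the adjunction between restriction $-\d$ and co-induction $\Ho_{RH}(RG,-)=-\Uparrow_H^G$: there is a natural isomorphism $\Ho_{RH}(P_\bullet\d, N)\cong \Ho_{RG}(P_\bullet, N\Uparrow_H^G)$. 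Using $N\Uparrow_H^G\cong N\u$ we get $\Ho_{RH}(P_\bullet\d,N)\cong \Ho_{RG}(P_\bullet, N\u)$ as complexes, and taking cohomology yields $\Ext_{RH}^n(M\d,N)\cong\Ext_{RG}^n(M,N\u)$, which is (2).

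For part (1), I would dualize the roles: take a projective resolution $Q_\bullet \to N$ of $N$ over $RH$; then $Q_\bullet\u \to N\u$ is a projective resolution over $RG$ because induction is exact and preserves projectives. So $\Ext_{RG}^n(N\u,M)$ is the cohomology of $\Ho_{RG}(Q_\bullet\u, M)$. Now use the usual tensor-hom adjunction between $-\u = RG\otimes_{RH}-$ and $-\d$, namely $\Ho_{RG}(RG\otimes_{RH}Q_\bullet, M)\cong \Ho_{RH}(Q_\bullet, M\d)$, natural in all arguments. Taking cohomology gives $\Ext_{RG}^n(N\u,M)\cong \Ext_{RH}^n(N,M\d)$, which is (1).

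The only genuinely delicate point is making sure the adjunction isomorphisms are natural enough to be applied levelwise to a complex and then commute with passing to cohomology; this is routine once one notes the isomorphisms are natural in both variables, so no real obstacle arises. A minor bookkeeping subtlety is that part (1) uses the ordinary tensor--hom adjunction (induction as left adjoint of restriction) while part (2) uses induction as the \emph{right} adjoint of restriction via the identification $N\u\cong N\Uparrow_H^G$; keeping straight which adjunction is in play, and resolving the correct variable (the first for a projective resolution, so that $\Ho$ of it computes $\Ext$), is where care is needed but nothing deeper than that.
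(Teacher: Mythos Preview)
Your argument is correct and is precisely the standard proof of the Eckmann--Shapiro Lemma: resolve the appropriate variable by projectives, use that induction and restriction are exact and preserve projectives (because $RG$ is free over $RH$), and then apply the relevant adjunction levelwise --- the tensor--hom adjunction for (1) and the restriction/co-induction adjunction together with $N\Uparrow_H^G\cong N\u$ for (2). The naturality remarks you make are exactly what is needed and there are no gaps.

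Note, however, that the paper does \emph{not} give its own proof of this lemma: it states the result and refers the reader to \cite{Benson}. So there is nothing in the paper to compare your argument against; what you have written is essentially the textbook proof one would find by following that citation.
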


In the particular case of $n=0$ they are known as the Nakayama relations.

\begin{lema}\cite[Theorem 10.73]{Rotman}\label{Tor}  For every $RH$-module $N$ and for every $RG$-module $M$ we have the following  isomorphism

 $$\Tor_n^{RH}(M\d , N)\cong \Tor_n^{RG}(M , N\u).$$

 \end{lema}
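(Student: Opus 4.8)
The plan is to prove the Tor-version of the Eckmann--Shapiro lemma, namely that for every $RH$-module $N$ and every $RG$-module $M$ one has $\Tor_n^{RH}(M\d,N)\cong\Tor_n^{RG}(M,N\u)$. The natural approach is dimension shifting via a projective resolution, exactly as in the classical group-ring case treated in Rotman's book. First I would take a projective resolution $P_\bullet\to N$ of $N$ as an $RH$-module. Applying the induction functor $-\u=RG\otimes_{RH}-$ to this resolution yields a complex $P_\bullet\u\to N\u$; since $RG$ is free (hence flat) as a right $RH$-module, induction is exact, so $P_\bullet\u$ is in fact a projective resolution of $N\u$ over $RG$ (induction preserves projectives, as noted in the excerpt).

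Next I would compute both Tor groups from these resolutions. On the one hand, $\Tor_n^{RG}(M,N\u)$ is the $n$-th homology of the complex $M\otimes_{RG}P_\bullet\u = M\otimes_{RG}(RG\otimes_{RH}P_\bullet)$. On the other hand, $\Tor_n^{RH}(M\d,N)$ is the $n$-th homology of $M\d\otimes_{RH}P_\bullet = (M\otimes_{RG}RG)\otimes_{RH}P_\bullet$. The key point is the natural associativity isomorphism of tensor products,
\[
M\otimes_{RG}(RG\otimes_{RH}P_i)\;\cong\;(M\otimes_{RG}RG)\otimes_{RH}P_i\;\cong\;M\d\otimes_{RH}P_i,
\]
which is natural in $P_i$ and therefore commutes with the differentials, giving an isomorphism of complexes and hence of their homologies. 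This is really the only substantive step.

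The main (and only mild) obstacle is bookkeeping about sides: one must be careful that $RG$ is used as an $(RG,RH)$-bimodule in the induction functor and that $M$ is a right $RG$-module in the $\Tor$ (or, in the left-module convention of the paper, that the roles of the two arguments are arranged so the associativity isomorphism applies), and that all identifications are natural so that they assemble into a chain map. Given the conventions already fixed in the excerpt (in particular that $RG$ is free as a left and right $RH$-module, and that induction is exact and preserves projectives), none of this presents a genuine difficulty; the proof is essentially a one-line reduction to the associativity of tensor product once the correct resolution is chosen. I would also remark that this is the flat analogue of Lemma~\ref{E-S lemma}, with the co-induction/induction coincidence $N\Uparrow_H^G\cong N\uparrow_H^G$ playing no role here since the argument is purely about induced resolutions.
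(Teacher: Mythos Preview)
Your argument is correct and is exactly the standard proof: induce a projective $RH$-resolution of $N$ using that $RG$ is free over $RH$, then identify the two chain complexes via associativity of the tensor product. The paper, however, does not supply its own proof of this lemma; it simply records the statement with a citation to \cite[Theorem 10.73]{Rotman}, so there is nothing further to compare.
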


\vspace*{.5cm}

Note that since $G$ is finite, the restriction functor preserves finitely generated modules. Moreover, since it is exact, it also preserves finitely presented modules.  

%

\section{$n$-perfect rings}

The left perfect rings (or left $0$-perfect rings) are those rings such that every flat module is projective.
Bass, in \cite{Bass}, linked the perfect  rings with the finitistic projective dimension of rings, denoted by $FPD(R)$. Later, Jensen in \cite{Jensen} proved that if $FPD(R)=n$, then every flat $R$-module has projective dimension at most $n$. This result motivated the study of the rings over which every flat module has projective dimension at most $n$, with $n$ a fixed positive integer. These rings were called, by Enochs, Jenda and López-Ramos \cite{EJL}, left $n$-perfect rings. Some examples of $n$-perfect ring are  local commutative Noetherian rings of Krull dimension $n$, left Noetherian rings with injective dimension (as left module) at most $n$ and $n$-Gorenstein rings.

The first aim of our work is to study when being an $n$-perfect ring is preserved by skew group actions.

We begin with an useful property of the induction and restriction functors.\\

\begin{lema}\cite[Proposition 3.1]{Li}\label{sumandos directos}
Let $M$ be an $RH$-module and $N$ be an $RG$-module. Then,
\begin{enumerate}
\item  $M$ is isomorphic to a direct summand of $M \uparrow _H^G \downarrow _H^G$.
\item If $RG$ is a separable extension over $RH$, then $N$ is isomorphic to a direct summand of $N \downarrow _H^G \uparrow _H^G$.
\end{enumerate}
\end{lema}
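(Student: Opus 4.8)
The plan is to prove both parts by exhibiting explicit splittings: for (1) from an $(RH,RH)$-bimodule decomposition of $RG$, and for (2) from the separability element.

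For part (1), the first step is to observe that $RG$ decomposes as an $(RH,RH)$-bimodule along the double cosets of $H$ in $G$. Writing $G=\bigsqcup_{D\in H\backslash G/H}D$ and $RD=\bigoplus_{x\in D}Rx$, each $RD$ is an $(RH,RH)$-subbimodule of $RG$, since left and right multiplication by an element of $H$ permutes the group elements inside a fixed double coset $D$ (the twisting by automorphisms of $R$ changes only the ring coefficients, not which $x\in G$ occurs). Hence $RG\cong RH\oplus\bigl(\bigoplus_{D\neq H}RD\bigr)$ as $(RH,RH)$-bimodules, with the copy of $RH$ coming from the trivial double coset; equivalently, the $R$-linear map $\pi\colon RG\to RH$, $\sum_{g}r_gg\mapsto\sum_{h\in H}r_hh$, is an $(RH,RH)$-bimodule retraction of the inclusion $RH\hookrightarrow RG$. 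Applying $-\otimes_{RH}M$ and restricting scalars then gives $M\u\d\cong RG\otimes_{RH}M\cong M\oplus\bigl(\bigoplus_{D\neq H}RD\otimes_{RH}M\bigr)$ as left $RH$-modules, so $M$ is a direct summand of $M\u\d$.

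For part (2), I would start from the canonical surjective left $RG$-module map $\mu_N\colon N\d\u=RG\otimes_{RH}N\to N$, $a\otimes n\mapsto an$, and construct a section using separability. Fix $e=\sum_{i=1}^{n}a_i\otimes b_i\in RG\otimes_{RH}RG$ with $\sum_ia_ib_i=1$ and $re=er$ for every $r\in RG$, and define $\sigma_N\colon N\to RG\otimes_{RH}N$ by $\sigma_N(n)=\sum_ia_i\otimes b_in$. This is well defined because $\sigma_N(n)$ is the image of $e$ under the map $RG\otimes_{RH}RG\to RG\otimes_{RH}N$ induced by the left $RH$-linear map $RG\to N$, $b\mapsto bn$; the relation $re=er$ yields $r\sigma_N(n)=\sigma_N(rn)$, so $\sigma_N$ is left $RG$-linear; and $\mu_N\sigma_N(n)=\sum_ia_ib_in=n$. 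Thus $\sigma_N$ splits $\mu_N$ and $N$ is a direct summand of $N\d\u$.

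I expect the only delicate point to be the well-definedness and $RG$-linearity of $\sigma_N$ in part (2), which is precisely where the two defining properties of the separability element are used; the double-coset decomposition in part (1) and the remaining tensor-product bookkeeping are routine.
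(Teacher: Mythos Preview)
Your argument is correct. The paper itself does not supply a proof of this lemma: it is quoted verbatim from \cite[Proposition 3.1]{Li} and used as a black box, so there is nothing in the present paper to compare your proof against beyond the citation. What you wrote is the standard proof one finds in the literature for this result. In part (1) the $(RH,RH)$-bimodule splitting $RG=RH\oplus\bigoplus_{D\neq H}RD$ along double cosets is exactly the right device (the simpler coset decomposition of $RG$ as a one-sided $RH$-module would not suffice, since it is not preserved by the left $RH$-action), and tensoring with $M$ gives the claim. In part (2) your use of the separability idempotent to build a section of the multiplication map $RG\otimes_{RH}N\to N$ is precisely how separable extensions are exploited; the two defining relations of the element $e$ are used exactly where you say, for $RG$-linearity and for the splitting identity respectively. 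Nothing is missing.
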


A direct consequence of lemma~\ref{sumandos directos} and the fact that the induction and restriction functors are exacts
and preserves projective modules is the following useful lemma.

\begin{lema}\label{dp}
Let $M$ be an $RH$-module and $N$ be an $RG$-module. Then,  $\pdH(M)=\pdG(M\u)$; $\pdH(N\d)\leq\pdG(N)$;  and when \sep  \ \ the equality holds.
\end{lema}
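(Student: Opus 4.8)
The plan is to exploit the three structural facts we already have: the induction and restriction functors are exact and preserve projectives (hence preserve projective resolutions), and Lemma~\ref{sumandos directos} says each module is (under the appropriate hypothesis) a direct summand of its image under the round-trip $\uparrow\downarrow$ or $\downarrow\uparrow$. Since a direct summand of a module has projective dimension no larger than that module, and applying an exact projective-preserving functor to a projective resolution of length $n$ yields a projective resolution of length at most $n$, projective dimension can only drop under such a functor and can only drop when passing to a direct summand.

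\medskip

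First I would prove $\pdH(M)=\pdG(M\u)$. For the inequality $\pdG(M\u)\le\pdH(M)$: take a projective resolution $P_\bullet\to M$ of length $\pdH(M)$ over $RH$; since induction is exact and sends projectives to projectives, $P_\bullet\u\to M\u$ is a projective resolution over $RG$ of the same length, so $\pdG(M\u)\le\pdH(M)$. For the reverse: apply restriction to a projective $RG$-resolution of $M\u$ of length $\pdG(M\u)$ to get a projective $RH$-resolution of $M\u\d$ of that length, so $\pdH(M\u\d)\le\pdG(M\u)$; now by Lemma~\ref{sumandos directos}(1), $M$ is a direct summand of $M\u\d$, hence $\pdH(M)\le\pdH(M\u\d)\le\pdG(M\u)$. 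Combining gives equality. This argument uses only the unconditional part of Lemma~\ref{sumandos directos}, so it holds with no separability hypothesis.

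\medskip

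Next, the inequality $\pdH(N\d)\le\pdG(N)$ is the same one-line argument as the first half above: restrict a projective $RG$-resolution of $N$ of length $\pdG(N)$; exactness and preservation of projectives make it a projective $RH$-resolution of $N\d$ of length $\le\pdG(N)$. Finally, assuming \sep, I would get the reverse inequality exactly as in the proof of equality above, but now using Lemma~\ref{sumandos directos}(2): induct a projective $RH$-resolution of $N\d$ of length $\pdH(N\d)$ to obtain a projective $RG$-resolution of $N\d\u$ of that length, so $\pdG(N\d\u)\le\pdH(N\d)$; since $N$ is a direct summand of $N\d\u$ by Lemma~\ref{sumandos directos}(2), $\pdG(N)\le\pdG(N\d\u)\le\pdH(N\d)$, giving equality.

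\medskip

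I do not expect a genuine obstacle here; the only point requiring a word of care is the elementary fact that $\pd$ of a direct summand is bounded by $\pd$ of the whole module (clear, since a projective resolution of $A\oplus B$ splits into ones for $A$ and $B$, or since $\Ext^{n+1}(A,-)$ is a direct summand of $\Ext^{n+1}(A\oplus B,-)$), together with the observation that one must handle the infinite case ($\pd=\infty$) uniformly — the resolution-pushing arguments show that if one side is finite the other is finite and no larger, which suffices. Everything else is bookkeeping with exact functors.
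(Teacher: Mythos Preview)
Your proof is correct and is precisely the argument the paper has in mind: it records the lemma as a direct consequence of Lemma~\ref{sumandos directos} together with the fact that induction and restriction are exact and preserve projectives, and your write-up simply makes those steps explicit.
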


Recall that for $n\geq 0$, a ring $R$ is said to be left $n$-perfect if for any flat  $R$-module $F$, $\pdR(F)\leq n$.

\begin{prop}\label{n-perfectos}
 Let $H$ be a subgroup of $G$. If $RG$ is a left $n$-perfect ring, then so is $RH$. Moreover,  if \sep; the skew group ring
$RG$ is a left $n$-perfect ring if and only if $RH$ is also a left $n$-perfect ring.
\end{prop}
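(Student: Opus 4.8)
The plan is to transport flatness between $RH\text{-}\Mod$ and $RG\text{-}\Mod$ using the induction and restriction functors — both of which are exact and, by \cite[Proposition 2.2]{Xiang}, preserve flat modules — and then to control projective dimensions with Lemma~\ref{dp}.

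For the implication ``$RG$ left $n$-perfect $\Rightarrow$ $RH$ left $n$-perfect'', I would take a flat $RH$-module $F$, observe that $F\u=RG\otimes_{RH}F$ is a flat $RG$-module, apply the hypothesis to get $\pdG(F\u)\le n$, and then invoke the identity $\pdH(F)=\pdG(F\u)$ from Lemma~\ref{dp} to conclude $\pdH(F)\le n$. This direction uses neither separability nor anything about $G$ beyond what is already built into the setup.

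For the converse, assuming that $RG$ is a separable extension over $RH$ and that $RH$ is left $n$-perfect, I would start from a flat $RG$-module $F$, note that its restriction $F\d$ is a flat $RH$-module (restriction preserves flats), hence $\pdH(F\d)\le n$, and then use the equality $\pdG(F)=\pdH(F\d)$ — the part of Lemma~\ref{dp} that requires the separable extension hypothesis — to obtain $\pdG(F)\le n$. Equivalently, the last step can be phrased via Lemma~\ref{sumandos directos}(2): $F$ is a direct summand of $F\d\u$, and $\pdG(F\d\u)=\pdH(F\d)\le n$ by Lemma~\ref{dp}, so $\pdG(F)\le n$.

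I do not expect a genuine obstacle: the proposition is essentially a formal consequence of the preservation of flat modules under induction and restriction combined with the projective-dimension bookkeeping of Lemma~\ref{dp}. The one place where separability is indispensable is the reverse inequality $\pdG(F)\le\pdH(F\d)$; without it the argument only yields that $RH$ is $n$-perfect, which is consistent with the asymmetry in the statement.
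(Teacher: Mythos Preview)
Your proof is correct and follows essentially the same route as the paper: transport flatness via induction/restriction (citing \cite[Proposition~2.2]{Xiang}) and then apply Lemma~\ref{dp} for the projective-dimension equalities, with separability entering only in the converse. The paper is terser about the reverse implication (``the same reasoning implies\dots''), but your spelled-out version via $\pdG(F)=\pdH(F\d)$ is exactly what is meant.
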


\begin{proof}

Assume that $RG$ is a $n$-perfect ring.  Let $M$ be an $RH$-module flat, by \cite[Proposition 2.2]{Xiang} the $RG$-module $M\u$ is also flat. Then,
$\pdH(M)=\pdG(M\u)\leq n$;  by Lemma \ref{dp} and  the fact that $RG$ is a $n$-perfect ring. We conclude that $RH$ is also a $n$-perfect ring. \\
If \sep, the same reasoning implies that $RG$ is a $n$-perfect ring whenever $RH$ is.

\end{proof}

Since the left $0$-perfects rings are the left perfects rings, if we take  $n=0$ we recover \cite[Corollary 2.12]{Xiang}.\\
Taking $H$ as the trivial group we have the following corollary.

\begin{coro} Let $R$ be a ring and let $G$ be a finite  group  whose order is invertible in $R$. Then,
 $RG$ is a left $n$-perfect ring if and only if $R$ is also a left $n$-perfect ring.
\end{coro}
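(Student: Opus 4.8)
The plan is to combine the previous corollary with the fact that when $G$ is a finite group whose order is invertible in $R$, the skew group ring $RG$ is a separable extension over $R$. This last fact is recorded in the Separable extensions subsection: taking $H$ to be the trivial subgroup, the index $[G:H] = |G|$ is invertible in $R$, so by \cite[Proposition 3.6]{Li} the ring $RG$ is a separable extension over $R = RH$.

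With that observation in hand, I would simply invoke Proposition~\ref{n-perfectos} with $H = \{1\}$. Since $RH = R\{1\} = R$ (the skew group ring over the trivial group is just $R$, the action being irrelevant), the ``moreover'' part of Proposition~\ref{n-perfectos} applies: because $RG$ is a separable extension over $RH = R$, we get that $RG$ is a left $n$-perfect ring if and only if $R$ is a left $n$-perfect ring. This is exactly the statement of the corollary, so no further work is needed.

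There is really no main obstacle here; the corollary is a direct specialization of the proposition once one notes that the separability hypothesis is automatically satisfied under the order-invertibility assumption. The only point worth spelling out for the reader is precisely that identification of the hypothesis: I would write one sentence reminding the reader that $|G|$ invertible in $R$ forces $RG$ to be a separable extension over $R$, and then one sentence applying Proposition~\ref{n-perfectos}. If one wanted to be slightly more self-contained, one could instead directly re-run the induction-restriction argument from the proof of Proposition~\ref{n-perfectos} in this special case, using Lemma~\ref{dp} and Lemma~\ref{sumandos directos}(2) together with the preservation of flatness under induction from \cite[Proposition 2.2]{Xiang}; but citing the proposition is cleaner.
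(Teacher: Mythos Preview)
Your proposal is correct and matches the paper's own approach exactly: the paper simply states ``Taking $H$ as the trivial group we have the following corollary'' immediately after Proposition~\ref{n-perfectos}, relying (as you do) on the fact from the preliminaries that $|G|$ invertible in $R$ makes $RG$ a separable extension over $R$.
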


If $R$ is a right coherent ring such that every flat $R$-module has finite projective dimension, then there exists an integer $n\geq 0$ such that
$\pdR(F)\leq n$ for any flat $R$-module $F$. Therefore $R$ is a left $n$-perfect ring and it is immediate the following corollary.

\begin{coro} Let $R$ be a ring and  $H$ be a subgroup of a group  $G$ such that  $RG$ is  a right coherent ring  and  every   flat $RG$-module has finite projective dimension. Then $RH$ is  a left $n$-perfect ring.
\end{coro}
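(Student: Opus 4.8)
The plan is to show that the two hypotheses on $RG$ force it to be left $n$-perfect for some $n$, and then to quote the unconditional half of Proposition~\ref{n-perfectos}. So the proof splits into a "uniform bound" step and a "transfer" step.

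For the first step I would argue that there exists an integer $n\geq 0$ with $\pdG(F)\leq n$ for every flat $RG$-module $F$. Suppose not; then for each $k\geq 1$ there is a flat $RG$-module $F_k$ with $\pdG(F_k)\geq k$. Since $RG$ is right coherent, a theorem of Chase guarantees that a direct product of flat left $RG$-modules is again flat, so $M=\prod_{k\geq 1}F_k$ is a flat $RG$-module and by hypothesis $\pdG(M)=m<\infty$. But each $F_k$ is a retract of $M$, via the canonical inclusion into and projection onto the $k$-th coordinate, hence $\pdG(F_k)\leq\pdG(M)=m$ for every $k$, contradicting $\pdG(F_k)\geq k$ for $k>m$. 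This is precisely the observation recorded in the remark preceding the statement; I expect it to be the main (and essentially only) obstacle, since everything afterwards is a direct appeal to the material already developed. With this bound in hand, $RG$ is a left $n$-perfect ring.

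For the second step it remains to invoke the first assertion of Proposition~\ref{n-perfectos}, namely that if $RG$ is left $n$-perfect then so is $RH$; note this half needs no separability assumption. Unwinding it, the argument is the one used in the proof of that proposition: given a flat $RH$-module $M$, the induced module $M\u$ is a flat $RG$-module by \cite[Proposition 2.2]{Xiang}, and Lemma~\ref{dp} gives $\pdH(M)=\pdG(M\u)\leq n$. Hence every flat $RH$-module has projective dimension at most $n$, i.e. $RH$ is left $n$-perfect, which is the assertion of the corollary.
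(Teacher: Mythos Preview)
Your proof is correct and follows exactly the paper's approach: the paragraph preceding the corollary records the same uniform-bound fact (right coherent plus every flat of finite projective dimension forces a single bound $n$), and then the corollary is deduced from Proposition~\ref{n-perfectos}. The only difference is that you spell out the standard product argument via Chase's theorem, whereas the paper simply asserts the uniform bound as known.
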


\section{Finitely $n$-presented modules}

Let $n\geq 0$ be an integer. According to \cite[Section 1]{Costa} an  $R$-module $M$ is said to be finitely $n$-presented, if there is an exact sequence
$$F_n\rightarrow F_{n-1}\rightarrow \cdots \rightarrow F_1 \rightarrow F_0 \rightarrow M \rightarrow 0$$ where the $F_i$ are finitely generated and projective (or free) $R$-modules, for every $0\leq i \leq n$. Such an exact sequence is called a finite $n$-presentation of $M$. \\

Denote by $\mathcal{FP}_n(R)$ the class of all finitely $n$-presented $R$-modules. In particular $\mathcal{FP}_0(R)$ is the class of all finitely generated $R$-modules and   $\mathcal{FP}_1(R)$ is the class of all finitely presented $R$-modules. Moreover, we have the following chain of inclusions:

$$\mathcal{FP}_0(R)\supseteq \mathcal{FP}_1(R)\supseteq \cdots \supseteq \mathcal{FP}_n(R)\supseteq \cdots $$

By  \cite[Proposition 1.7]{BP} the class $\mathcal{FP}_n(R)$  of all finitely $n$-presented $R$-modules is closed under extensions and under direct summands.

\begin{lema}\label{finitamentepresentados}
Let $M$ be an $RH$-module and $N$ be an $RG$-module.
\begin{enumerate}
\item If $N$ is a finitely $n$-presented $RG$-module, then $N\downarrow _H^G$ is a finitely $n$-presented $RH$-module.
\item If $M$ is a finitely $n$-presented $RH$-module, then $M\uparrow _H^G$ is a finitely $n$-presented $RG$-module.
\end{enumerate}
\end{lema}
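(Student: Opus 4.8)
The plan is to exploit the two properties that make the induction and restriction functors so well behaved: both are exact (since $RG$ is free as a left and as a right $RH$-module, so that $RG$ is both a projective left and a projective right $RH$-module), and both send finitely generated projective modules to finitely generated projective modules. Once these two observations are in place, both parts follow by applying the functor term-by-term to a finite $n$-presentation and checking that exactness is preserved.

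For part (1), I would start with a finite $n$-presentation of the $RG$-module $N$, say an exact sequence $F_n\to F_{n-1}\to\cdots\to F_0\to N\to 0$ with each $F_i$ a finitely generated free (or projective) $RG$-module. Since $RG$ is finitely generated and free as a right $RH$-module — here I use that $G$ is finite, so $RG=\bigoplus_{g\in G}RgH$-style decomposition gives a free $RH$-basis of size $|G|$ — each $F_i\d$ is a finitely generated free $RH$-module: a free $RG$-module on $k$ generators restricts to a free $RH$-module on $k|G|$ generators (and a finitely generated projective summand restricts to a finitely generated projective module). Because restriction is just the identity on underlying sets with scalars cut down, the sequence $F_n\d\to\cdots\to F_0\d\to N\d\to 0$ is again exact. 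Hence it is a finite $n$-presentation of $N\d$ over $RH$, and $N\d\in\mathcal{FP}_n(RH)$.

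For part (2), I would take a finite $n$-presentation $P_n\to\cdots\to P_0\to M\to 0$ of the $RH$-module $M$ with the $P_i$ finitely generated free $RH$-modules, and apply the induction functor $RG\otimes_{RH}-$. Since $RG$ is flat (indeed free) as a right $RH$-module, $RG\otimes_{RH}-$ is exact, so the induced sequence $P_n\u\to\cdots\to P_0\u\to M\u\to 0$ is exact. Moreover $RG\otimes_{RH}RH^{\,k}\cong RG^{\,k}$ is a finitely generated free $RG$-module (and the image of a finitely generated projective $RH$-module is a finitely generated projective $RG$-module). Therefore this is a finite $n$-presentation of $M\u$ over $RG$, giving $M\u\in\mathcal{FP}_n(RG)$.

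There is no real obstacle here: the only point needing a little care is to record precisely \emph{why} restriction and induction preserve the class of finitely generated projective modules, which comes down to the finiteness of $G$ (so that $RG$ has finite rank as an $RH$-module on either side) together with the standard isomorphisms $RG\otimes_{RH}RH^k\cong RG^k$ and, for projectives, passing to direct summands of free modules — using that the relevant classes are closed under direct summands. Exactness of both functors is already noted in the excerpt (the induction and restriction functors are exact and preserve projectives), so the argument is essentially a bookkeeping exercise of transporting a length-$n$ resolution across an exact, finitely-generated-projective-preserving functor.
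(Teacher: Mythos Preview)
Your proposal is correct and follows essentially the same approach as the paper: the paper's proof simply observes that the restriction and induction functors preserve finitely generated modules and, being exact and projective-preserving, therefore preserve finitely $n$-presented modules. Your version is a faithful expansion of this one-line argument, with the extra care of spelling out why finitely generated projectives are preserved (via the finiteness of $G$ and the free rank of $RG$ over $RH$).
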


\begin{proof}
It is enough to observe that the restriction and induction  functors preserve finitely generated modules and;  since they are  exact and preserves projective modules, they also preserve finitely $n$-presented modules.
\end{proof}

In \cite{Costa} Costa introduced a doubly filtered set of classes of rings with the aim of better understand the structures of non-Noetherian rings. This generalizes the concepts of von Neumann regular, hereditary  and semi-hereditary rings.
Let $n,d\geq 0$ be integers; a ring $R$ is said to be a left \textit{$(n,d)$-ring} if every $n$-presented  module has projective dimension at most $d$. Observe that when $d=0$, one gets back the $n$-von Neumann regular rings (In the sense of \cite{Mahdou}) and  if $d=1$,  one obtains the $n$-hereditary rings defined in \cite{BPa}. \\

\begin{prop}\label{ndrings} Let $H$ be a subgroup of $G$. If $RG$ is a left $(n,d)$-ring, then so is $RH$. Moreover,  if \sep; the skew group ring
$RG$ is a left $(n,d)$-ring  if and only if $RH$ is also a left $(n,d)$-ring.
\end{prop}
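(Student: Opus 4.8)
The plan is to mimic the proof of Proposition~\ref{n-perfectos}, replacing the class of flat modules by the class $\mathcal{FP}_n$ of finitely $n$-presented modules and using Lemma~\ref{finitamentepresentados} in place of the preservation of flatness by induction.

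First I would prove the implication ``$RG$ a left $(n,d)$-ring $\Rightarrow$ $RH$ a left $(n,d)$-ring'' with no separability hypothesis. Let $M$ be a finitely $n$-presented $RH$-module. By Lemma~\ref{finitamentepresentados}(2), $M\u$ is a finitely $n$-presented $RG$-module, so by hypothesis $\pdG(M\u)\leq d$. By Lemma~\ref{dp} we have $\pdH(M)=\pdG(M\u)\leq d$. Since $M$ was an arbitrary finitely $n$-presented $RH$-module, $RH$ is a left $(n,d)$-ring.

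For the converse, assume \sep \ and that $RH$ is a left $(n,d)$-ring. Let $N$ be a finitely $n$-presented $RG$-module. By Lemma~\ref{finitamentepresentados}(1), $N\d$ is a finitely $n$-presented $RH$-module, hence $\pdH(N\d)\leq d$. By Lemma~\ref{dp}, since $RG$ is a separable extension over $RH$, we have $\pdG(N)=\pdH(N\d)\leq d$. Therefore $RG$ is a left $(n,d)$-ring, completing the proof.

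I do not expect any serious obstacle: the argument is a formal consequence of the two lemmas already in hand, exactly parallel to Proposition~\ref{n-perfectos}. The only point to be slightly careful about is that Lemma~\ref{dp} gives the equality $\pdG(N)=\pdH(N\d)$ only under the separability assumption, whereas the easy inequality $\pdH(N\d)\leq\pdG(N)$ holds unconditionally; but this is precisely why separability is needed for the converse direction and not for the forward one. One might also remark that taking $H$ trivial and $|G|$ invertible in $R$ recovers the statement for $R$ versus $RG$, as in the corollaries following Proposition~\ref{n-perfectos}.
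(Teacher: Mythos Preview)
Your proof is correct and follows exactly the same approach as the paper: the paper's proof simply says to combine Lemmas~\ref{finitamentepresentados} and~\ref{dp}, which is precisely what you have spelled out, and notes that the same reasoning gives the converse under the separability hypothesis.
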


\begin{proof}
Assume that $RG$ is a  $(n,d)$-ring. After  combining Lemmas \ref{finitamentepresentados} and  \ref{dp}  we conclude that $RH$ is also a $(n,d)$-ring. The same reasoning applies if \sep \ and $RH$ is a  $(n,d)$- ring.
\end{proof}

Taking $H$ as the trivial group we have the following corollary.

\begin{coro} Let $R$ be a ring and let $G$ be a finite  group  whose order is invertible in $R$. Then:

\begin{enumerate}
 \item $RG$ is a left $(n,d)$-ring if and only if $R$ is too.\\
 In particular:
 \item $RG$ is a left $n$-hereditary ring if and only if $R$ is too.
 \item $RG$ is a left $n$-von Neumann regular ring if and only if $R$ is too.

\end{enumerate}
\end{coro}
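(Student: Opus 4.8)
The plan is to deduce everything from Proposition~\ref{ndrings} by specializing the subgroup $H$ to the trivial subgroup $\{1\}\leq G$. With this choice, $RH=R\{1\}$ is canonically identified with $R$ itself, and the induction and restriction functors become the usual extension and restriction of scalars along $R\hookrightarrow RG$.

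First I would check that the hypothesis ``$RG$ is a separable extension over $RH$'' of Proposition~\ref{ndrings} is satisfied in the present situation. This is exactly the classical fact recalled in the subsection on separable extensions: when the order of $G$ is invertible in $R$, the element $\tfrac1{|G|}\sum_{g\in G} g\otimes g^{-1}\in RG\otimes_R RG$ (suitably interpreted) provides a splitting of the multiplication map $RG\otimes_R RG\to RG$, so $RG$ is a separable extension over $R$. Hence Proposition~\ref{ndrings} applies with $H=\{1\}$ and gives the equivalence ``$RG$ is a left $(n,d)$-ring $\iff$ $R$ is a left $(n,d)$-ring'', which is assertion~(1).

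For (2) and (3) I would simply invoke the definitions given just before Proposition~\ref{ndrings}: a left $n$-hereditary ring is by definition a left $(n,1)$-ring, and a left $n$-von Neumann regular ring is by definition a left $(n,0)$-ring. Thus (2) is the case $d=1$ of (1) and (3) is the case $d=0$ of (1), and no further argument is needed.

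There is essentially no obstacle here, since this corollary is a routine specialization; the only point requiring a word of care is the verification that invertibility of $|G|$ in $R$ indeed yields the separability of $RG$ over $R$, but this is already recorded in the preliminaries (``if the order of $G$ is invertible in $R$, the skew group ring $RG$ is a separable extension over $R$''), so it can be cited rather than reproved.
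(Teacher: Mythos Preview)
Your proposal is correct and matches the paper's approach exactly: the corollary is obtained by specializing Proposition~\ref{ndrings} to the trivial subgroup $H=\{1\}$, using that invertibility of $|G|$ in $R$ makes $RG$ separable over $R$ (as recorded in the preliminaries), and then reading off the cases $d=1$ and $d=0$ for parts (2) and (3). The paper itself records no proof beyond the sentence ``Taking $H$ as the trivial group we have the following corollary,'' so your write-up is in fact more detailed than what appears there.
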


Also related to the idea of finitely $n$-presented modules, is the notion of $n$-coherent rings, which generalizes that of coherent rings. Recall that a ring $R$ is said to be left $n$-coherent if every finitely $n$-presented $R$-module is finitely $(n+1)$-presented. Thus coherent rings are just $1$-coherent rings, and $0$-coherent rings coincide with Noetherian rings. Examples of $n$-coherent rings are: $n$-von Neumann regular rings, $n$-hereditary rings  and $(s,d)$-rings with $n= max\lbrace{s,d}\rbrace.$   \\

\begin{prop}\label{nCoherent}
Let $H$ be a subgroup of $G$. If $RG$ is a left $n$-coherent ring then $RH$ is a left $n$-coherent ring. Moreover,  if \sep; the skew group ring
$RG$ is a left $n$-coherent ring if and only if $RH$ is a left $n$-coherent ring.
\end{prop}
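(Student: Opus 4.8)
The plan is to mimic the strategy of Propositions \ref{n-perfectos} and \ref{ndrings}: feed a finitely $n$-presented module through the induction–restriction procedure, use $n$-coherence on one side to upgrade it to finitely $(n+1)$-presented, transport it back through the other functor, and then recover the original module as a direct summand. The only new input needed is that $\mathcal{FP}_{n+1}$ is closed under direct summands, which is \cite[Proposition 1.7]{BP}.

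For the first implication, assume $RG$ is left $n$-coherent and let $M$ be a finitely $n$-presented $RH$-module. By Lemma \ref{finitamentepresentados}(2) the induced module $M\u$ lies in $\mathcal{FP}_n(RG)$, hence in $\mathcal{FP}_{n+1}(RG)$ since $RG$ is $n$-coherent. Restricting scalars, Lemma \ref{finitamentepresentados}(1) gives $M\u\d \in \mathcal{FP}_{n+1}(RH)$. By Lemma \ref{sumandos directos}(1), $M$ is a direct summand of $M\u\d$, so closure of $\mathcal{FP}_{n+1}(RH)$ under direct summands yields $M \in \mathcal{FP}_{n+1}(RH)$. Thus every finitely $n$-presented $RH$-module is finitely $(n+1)$-presented, i.e. $RH$ is left $n$-coherent.

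For the converse, assume in addition that \sep \ and that $RH$ is left $n$-coherent; we show $RG$ is left $n$-coherent. Let $N$ be a finitely $n$-presented $RG$-module. By Lemma \ref{finitamentepresentados}(1), $N\d \in \mathcal{FP}_n(RH)$, hence $N\d \in \mathcal{FP}_{n+1}(RH)$ because $RH$ is $n$-coherent. Inducing back up, Lemma \ref{finitamentepresentados}(2) gives $N\d\u \in \mathcal{FP}_{n+1}(RG)$. Here the separability hypothesis is essential: Lemma \ref{sumandos directos}(2) says $N$ is a direct summand of $N\d\u$, and closure of $\mathcal{FP}_{n+1}(RG)$ under direct summands forces $N \in \mathcal{FP}_{n+1}(RG)$, as desired.

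Since every step is a direct invocation of an already-established lemma, I do not anticipate a real technical obstacle; the only point requiring care is the bookkeeping of the shift $\mathcal{FP}_n \leadsto \mathcal{FP}_{n+1}$ as it passes through the two functors, and invoking the direct-summand closure on the correct side ($RH$ in the first part, $RG$ in the second). The latter also explains why the converse genuinely requires the separable-extension hypothesis: without it Lemma \ref{sumandos directos}(2) is unavailable and one cannot recover $N$ from $N\d\u$.
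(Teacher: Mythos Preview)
Your proof is correct and follows essentially the same approach as the paper's own proof: pass through induction/restriction, use $n$-coherence on the other side to upgrade from $\mathcal{FP}_n$ to $\mathcal{FP}_{n+1}$, transport back, and invoke Lemma~\ref{sumandos directos} together with closure of $\mathcal{FP}_{n+1}$ under direct summands. The paper merely compresses the two directions into a single parenthetical argument, whereas you write them out separately.
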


\begin{proof}
Assume that $RG$ (or $RH$) is a left $n$-coherent ring. For every finitely $n$-presented $RH$-module (or $RG$-module, respectively) $M$, by Lemma \ref{finitamentepresentados}, $M\u$ (or $M\d$, respectively) is finitely $n$-presented $RG$-module (or $RH$-module, respectively), in consequence finitely $(n+1)$-presented. It follows that $M\u\d$ (or $M\d\u$, respectively) is also finitely $(n+1)$-presented. Since
$M$ is isomorphic to a direct summand of $M\u\d$ (or $M\d\u$ if \sep, respectively) and  the class $\mathcal{FP}_{n+1}(RH)$  (or $\mathcal{FP}_{n+1}(RG)$, respectively)  is closed  under direct summands it follows that
 $M$ is finitely $(n+1)$-presented.
\end{proof}

Coherent rings can be characterized using the Chase's theorem which establishes that $R$ is a right (or left) coherent ring if and only if,  for all indexing set $I$,  the left (or right, respectively) $R$-module $\prod_{i\in I} R$ is flat.  Then, in the particular case of $n=1$, we have the following stronger result.

\begin{prop}\label{Coherent} Let $H$ be a subgroup of $G$. Then,  $RG$ is a right (or left)  coherent ring if and only if   $RH$ is a right (or left, respectively) coherent ring.
\end{prop}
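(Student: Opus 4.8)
The plan is to exploit Chase's theorem together with the fact that $RG$ is a free module of finite rank over $RH$ on both sides, so that arbitrary products interact well with the induction and restriction functors. First I would record the precise form of Chase's theorem I intend to use: $S$ is right coherent if and only if for every index set $I$ the left $S$-module $S^{I}=\prod_{i\in I}S$ is flat. So to prove the ``only if'' direction, assume $RG$ is right coherent, fix an index set $I$, and I must show that $(RH)^{I}$ is flat as a left $RH$-module. The idea is to apply the induction functor: since $RG$ is a free (hence finitely generated projective) right $RH$-module, $RG\otimes_{RH}-$ commutes with the direct product $(RH)^{I}$, so $(RH)^{I}\uparrow_H^G \cong ((RH)\uparrow_H^G)^{I} \cong (RG)^{I}$ as left $RG$-modules. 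By Chase applied to $RG$, this module $(RG)^{I}$ is flat over $RG$; restricting scalars along $RH\subseteq RG$ (restriction preserves flat modules, as recalled in the Preliminaries) shows $(RG)^{I}\!\downarrow_H^G$ is a flat $RH$-module. But by Lemma~\ref{sumandos directos}(1), $(RH)^{I}$ is a direct summand of $(RH)^{I}\uparrow_H^G\downarrow_H^G \cong (RG)^{I}\downarrow_H^G$, and a direct summand of a flat module is flat, so $(RH)^{I}$ is flat; hence $RH$ is right coherent.

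For the ``if'' direction, assume $RH$ is right coherent; fix an index set $I$ and show $(RG)^{I}$ is flat as a left $RG$-module. Here I use the restriction functor instead: since $RG$ is a finitely generated free \emph{left} $RH$-module, viewing a product of copies of $RG$ as $RH$-modules, $(RG)^{I}\!\downarrow_H^G \cong ((RG)\!\downarrow_H^G)^{I}$, and $RG\!\downarrow_H^G$ is a finite direct sum of copies of $RH$, so $(RG)^{I}\!\downarrow_H^G$ is isomorphic to a finite direct sum of copies of $(RH)^{I}$, which is flat over $RH$ by Chase applied to $RH$. Thus $(RG)^{I}$ is flat when restricted to $RH$. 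To conclude that $(RG)^{I}$ is flat as an $RG$-module, I induce back up: $(RG)^{I}\!\downarrow_H^G\uparrow_H^G$ is flat over $RG$ because the induction functor preserves flat modules and I have just shown $(RG)^{I}\!\downarrow_H^G$ is flat over $RH$; and $(RG)^{I}$ is a direct summand of $(RG)^{I}\!\downarrow_H^G\uparrow_H^G$ by Lemma~\ref{sumandos directos}(2)—\emph{provided} $RG$ is a separable extension over $RH$.

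The main obstacle is precisely this last point: Lemma~\ref{sumandos directos}(2) requires the separability hypothesis, which the proposition does not assume. So for the ``if'' direction I would instead avoid Lemma~\ref{sumandos directos}(2) altogether and argue directly. Observe that $RG$, as a left $RG$-module, is a direct summand of $RG\otimes_{RH}(RG\!\downarrow_H^G)$ is not automatic either; what \emph{is} automatic is that $RG\!\downarrow_H^G\uparrow_H^G = RG\otimes_{RH}RG$, and the multiplication map $RG\otimes_{RH}RG\to RG$ is a split epimorphism of $RG$-$RG$-bimodules exactly when $RG/RH$ is separable. Hence the cleanest route is: since $RG$ is free of finite rank as a left $RH$-module, $RG\!\downarrow_H^G \cong (RH)^{m}$ for some finite $m$, so $(RG)^{I}\!\downarrow_H^G \cong (RH)^{mI}$, a product of copies of $RH$, which is $RH$-flat by Chase; then $(RG)^{I}$ is $RH$-flat, and because $RH\subseteq RG$ with $RG$ free over $RH$ on the left, $RH$-flatness of an $RG$-module is equivalent to $RG$-flatness via the change-of-rings spectral sequence or directly: $-\otimes_{RG}(RG)^{I}$ applied to a monomorphism $X\hookrightarrow Y$ of right $RG$-modules can be computed after restricting to $RH$, where it is exact, and the comparison is faithful because $RG$ is faithfully flat over $RH$ on the appropriate side. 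I would present this as a short lemma isolating ``an $RG$-module is $RG$-flat iff it is $RH$-flat,'' which follows from $RG$ being a faithfully flat $RH$-module on both sides, and then both directions of Proposition~\ref{Coherent} drop out of Chase's theorem uniformly, with the right/left versions handled symmetrically.
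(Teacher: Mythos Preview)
Your ``only if'' direction is correct and in fact more direct than the paper's, which simply invokes the preceding Proposition~\ref{nCoherent} with $n=1$. Your use of Chase together with the isomorphism $(RH)^{I}\uparrow_H^G \cong (RG)^{I}$ is clean.

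The ``if'' direction, however, contains a genuine error. The lemma you propose to isolate---``an $RG$-module is $RG$-flat if and only if it is $RH$-flat''---is false without the separability hypothesis. Take $R=\mathbb{F}_2$, $G=\mathbb{Z}/2\mathbb{Z}$ acting trivially, and $H=1$. Then $RH=\mathbb{F}_2$ and $RG\cong\mathbb{F}_2[\varepsilon]/(\varepsilon^2)$; the trivial module $\mathbb{F}_2$ is flat over $RH$ (everything is) but not flat over $RG$ (the only flat modules over this local Artinian ring are free). Neither faithful flatness of $RG$ over $RH$ nor any change-of-rings spectral sequence rescues this, because $\Tor^{RG}_1(-,M)$ simply does not vanish just because $\Tor^{RH}_1(-,M\!\downarrow)$ does.

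The fix is much simpler than what you attempted, and you already have the key ingredient from your own ``only if'' argument. The isomorphism $(RG)^{I}\cong (RH)^{I}\uparrow_H^G$ you established there works in both directions: if $RH$ is right coherent then $(RH)^{I}$ is $RH$-flat by Chase, and since induction preserves flat modules, $(RG)^{I}\cong RG\otimes_{RH}(RH)^{I}$ is $RG$-flat. This is exactly the paper's proof. There is no need to restrict first and then induce back up; just read the isomorphism the other way.
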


\begin{proof}
By the previous proposition it remains to assume that $RH$ is a right coherent ring. Then, for all indexing set $I$,  the left $RH$-module $\prod_{i\in I} RH$ is flat. The functor $RG \otimes_{RH}-$ preserves flat modules and since $RG$ is a finitely presented $RH$-module it also preserves direct products. Therefore,

$$\prod_{i\in I} RG\cong \prod_{i\in I} RG \otimes_{RH}RH \cong RG \otimes_{RH}\prod_{i\in I} RH$$
Then, the left $RG$-module $\prod_{i\in I} RG$  is also left flat and in consequence $RG$ is a right coherent ring.

\end{proof}

%
%

\begin{coro} Let $R$ be a right coherent ring and  let $G$ be a group  such that  every    flat $RG$-module has finite projective dimension. Then $RG$ is  a left $n$-perfect ring.
\end{coro}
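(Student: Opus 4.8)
The plan is to reduce the statement to the discussion of $n$-perfect rings in Section~2, after first transferring the coherence hypothesis from $R$ to $RG$. First I would take $H$ to be the trivial subgroup of $G$, so that $RH=R$; since $R$ is right coherent, Proposition~\ref{Coherent} applies and yields that $RG$ is a right coherent ring.

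Next, by hypothesis every flat $RG$-module has finite projective dimension. Applying to the ring $RG$ the fact recalled in Section~2 (just before the corollary to Proposition~\ref{n-perfectos})---namely, that over a right coherent ring on which every flat module has finite projective dimension there is a uniform integer $n$ with $\pdG(F)\le n$ for all flat $RG$-modules $F$---we conclude at once that $RG$ is a left $n$-perfect ring, which is the desired conclusion.

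The argument is essentially formal, so I do not anticipate a genuine obstacle. The only substantive input beyond bookkeeping is Proposition~\ref{Coherent}, used with $H$ the trivial subgroup, which encapsulates the nontrivial passage from coherence of $R$ to coherence of $RG$ (via the Chase-type argument: $RG\otimes_{RH}-$ preserves flat modules and, because $RG$ is finitely presented over $R$, also direct products, so $\prod_{i\in I}RG\cong RG\otimes_{R}\prod_{i\in I}R$ is flat). Once that is in place, the quoted structure theorem for right coherent rings whose flat modules have finite projective dimension does the rest, and no detour through Proposition~\ref{n-perfectos} or any separability hypothesis is required.
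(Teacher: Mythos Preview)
Your proposal is correct and matches the paper's intended argument: the corollary is stated immediately after Proposition~\ref{Coherent} with no explicit proof, and the evident route is exactly the one you describe---use Proposition~\ref{Coherent} with $H$ trivial to pass right coherence from $R$ to $RG$, then invoke the fact recalled in Section~2 that right coherence plus ``every flat module has finite projective dimension'' forces a uniform bound $n$. Your closing remarks about the Chase-type argument behind Proposition~\ref{Coherent} are accurate but merely unpack that proposition rather than offer an alternative path.
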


Recall that a ring $R$ is called \textit{regular} if every finitely generated ideal of $R$ has finite projective dimension. Coherent rings of finite weak dimension are regular rings, though the converse does not necessarily hold.
Observe that the notions of regular  and von Neumann regular  rings do not agree. The von Neumann regularity of skew group rings was studied in \cite{AAdR}.\\

If $R$ is coherent, Quentel shows in \cite{quentel},  that $R$ is regular if and only if every finitely presented module has finite projective dimension. Then an analysis similar to that in the proof  of Proposition \ref{ndrings} shows the following.

\begin{prop}\label{coherenteregular} Let $H$ be a subgroup of $G$. If $RG$ is a left regular  coherent ring, then so is $RH$. Moreover,  if \sep; the skew group ring
$RG$ is a left regular coherent ring  if and only if $RH$ is a left regular coherent ring.
\end{prop}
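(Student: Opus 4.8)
The plan is to reduce the statement to two ingredients already at hand: the transfer of coherence established in Proposition \ref{Coherent}, and Quentel's characterization quoted from \cite{quentel}, namely that a left coherent ring is left regular if and only if every finitely presented left module has finite projective dimension. Accordingly, the first thing I would record is that, by Proposition \ref{Coherent}, $RG$ is left coherent if and only if $RH$ is left coherent; hence in each implication to be proved, coherence of \emph{both} rings is available from the outset, and the only remaining task is to control the projective dimensions of finitely presented modules.

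For the first assertion, assume $RG$ is left regular coherent. Given a finitely presented $RH$-module $M$, Lemma \ref{finitamentepresentados} shows that $M\u$ is a finitely presented $RG$-module, so Quentel's criterion gives $\pdG(M\u)<\infty$; then Lemma \ref{dp} yields $\pdH(M)=\pdG(M\u)<\infty$. Since $RH$ is coherent and every finitely presented $RH$-module has finite projective dimension, Quentel's criterion again shows that $RH$ is regular. For the converse, assume moreover that \sep and that $RH$ is left regular coherent. Given a finitely presented $RG$-module $N$, Lemma \ref{finitamentepresentados} shows that $N\d$ is a finitely presented $RH$-module, whence $\pdH(N\d)<\infty$. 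By Lemma \ref{dp} (applied to the $RH$-module $N\d$), $\pdG(N\d\u)=\pdH(N\d)<\infty$, and by Lemma \ref{sumandos directos}(2) the module $N$ is isomorphic to a direct summand of $N\d\u$; since projective dimension does not increase when passing to a direct summand, $\pdG(N)\le\pdG(N\d\u)<\infty$. Thus every finitely presented $RG$-module has finite projective dimension, and $RG$ is coherent, so Quentel's criterion shows that $RG$ is regular.

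I do not expect a serious obstacle here: the proof is essentially a routine assembly of Lemmas \ref{dp}, \ref{sumandos directos} and \ref{finitamentepresentados} together with Proposition \ref{Coherent}. The two points that merit a line of care are, first, that Quentel's theorem requires the ring to be coherent — which is exactly what Proposition \ref{Coherent} supplies on both sides, with no separability needed — and, second, the use of the separability hypothesis in the converse direction, which enters only through Lemma \ref{sumandos directos}(2) and the elementary fact that the projective dimension of a direct summand is bounded by that of the ambient module. Finally, I would remark that, taking $H$ trivial and $|G|$ invertible in $R$, this yields at once that $RG$ is left regular coherent if and only if $R$ is.
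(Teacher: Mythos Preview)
Your proof is correct and follows essentially the same approach as the paper, which simply remarks that the argument is analogous to that of Proposition~\ref{ndrings} (i.e., combine Quentel's characterization with Lemmas~\ref{finitamentepresentados} and~\ref{dp}, and use Proposition~\ref{Coherent} for the coherence part). The only cosmetic difference is that in the converse you pass through $N\d\u$ and Lemma~\ref{sumandos directos}(2), whereas one may invoke the separable-case equality $\pdG(N)=\pdH(N\d)$ in Lemma~\ref{dp} directly; either route is fine.
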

\qed

\section{Relative modules and dimensions}

In \cite{Zhou} Zhou  introduces the classes of injective and flat modules relatives to the class of finitely $n$-presented modules. We recall the definition here. \\

Let $R$ be a ring and $n,d \geq 0$ be  two integers. An $R$-module $M$  is said to be \textit{$(n,d)$-injective} if $\Ext^{1+d}_R(F,M)=0$ for all  finitely $n$-presented module $F$. We denote by $(n,d)$-$Inj(R)$ the class of all $(n,d)$-injective modules. Analogously, an $R$-module $M$  is said to be \textit{$(n,d)$-flat} if $\Tor^R_{1+d}(F,M)=0$ for all  finitely $n$-presented right  module $F$. We denote by $(n,d)$-$Flat(R)$ the class of all $(n,d)$-flat modules. The  $(n,0)$-flat (or $(n,0)$-injective) modules are also known in the literature as $FP_n$-flat (or $FP_n$-injective, respectively) modules. The  $(n,n-1)$-flat (or $(n,n-1)$-injective) modules were introduced before by Chen and Ding \cite{CD} as $n$-flat (or $n$-$FP$-injective, respectively) modules with the aim of characterizing the $n$-coherent rings. \\

By \cite[Proposition 2.6]{Zhou} $R$ is  a left \textit{$(n,d)$-ring} if and only if  every $R$-module is $(n,d)$-injective. Observe that $M$ is injective if and only if $M$ is $(0,0)$-injective;  $M$ is $\rm FP$-injective if and only if $M$ is $(1,0)$-injective. The usual flat modules coincide with the $(0,0)$-flat modules and also coincide with the $(1,0)$-flat modules.\\

%
%
%
%

It is clear that the classes $(n,d)$-$Inj(R)$ and $(n,d)$-$Flat(R)$  are closed  under direct summands.

\begin{prop}\label{ndinjectives}
Let $H$ be a subgroup of $G$, $M$ be an $RH$-module and $N$ be an $RG$-module.

\begin{enumerate}
\item If $M$ is an  $(n,d)$-injective $RH$-module, then $M\u$  is an  $(n,d)$-injective $RG$-module.
\item If $N$ is an  $(n,d)$-injective $RG$-module, then $N\d$  is an  $(n,d)$-injective $RH$-module.
\item If $M\u$  is an  $(n,d)$-injective $RG$-module, then $M$ is an  $(n,d)$-injective $RH$-module.
\item If \sep and $N\d$  is an  $(n,d)$-injective $RH$-module, then $N$ is an  $(n,d)$-injective $RG$-module.
\end{enumerate}

\end{prop}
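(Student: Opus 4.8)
The plan is to deduce all four statements from the Eckmann--Shapiro Lemma (Lemma~\ref{E-S lemma}) together with Lemma~\ref{finitamentepresentados} and Lemma~\ref{sumandos directos}. The key observation is that $(n,d)$-injectivity is detected by the vanishing of $\Ext^{1+d}$ against the class $\mathcal{FP}_n$, and this class is stable under induction and restriction, so the relevant $\Ext$-groups can be transported back and forth between $RH$ and $RG$.

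For (1): let $F$ be a finitely $n$-presented $RG$-module. By part (2) of the Eckmann--Shapiro Lemma, $\Ext^{1+d}_{RG}(F, M\u)\cong\Ext^{1+d}_{RH}(F\d, M)$. Since $F\d$ is finitely $n$-presented over $RH$ by Lemma~\ref{finitamentepresentados}(1), and $M$ is $(n,d)$-injective, this group vanishes; as $F$ was an arbitrary finitely $n$-presented $RG$-module, $M\u$ is $(n,d)$-injective. For (2): let $F$ be a finitely $n$-presented $RH$-module. By part (1) of the Eckmann--Shapiro Lemma, $\Ext^{1+d}_{RH}(F, N\d)\cong\Ext^{1+d}_{RG}(F\u, N)$, and $F\u$ is finitely $n$-presented over $RG$ by Lemma~\ref{finitamentepresentados}(2), so this vanishes because $N$ is $(n,d)$-injective. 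Hence $N\d$ is $(n,d)$-injective.

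For (3) and (4) we combine the first two parts with the direct-summand statements. If $M\u$ is $(n,d)$-injective over $RG$, then by (2) the module $M\u\d$ is $(n,d)$-injective over $RH$; by Lemma~\ref{sumandos directos}(1), $M$ is a direct summand of $M\u\d$, and since $(n,d)$-$Inj(RH)$ is closed under direct summands, $M$ is $(n,d)$-injective. Symmetrically, for (4): if \sep \ and $N\d$ is $(n,d)$-injective over $RH$, then by (1) the module $N\d\u$ is $(n,d)$-injective over $RG$; by Lemma~\ref{sumandos directos}(2), $N$ is a direct summand of $N\d\u$, and closure under direct summands gives that $N$ is $(n,d)$-injective.

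I do not anticipate a serious obstacle here: the argument is a formal juggling of the Eckmann--Shapiro isomorphisms and the stability properties already established. The one point requiring a little care is bookkeeping on sides --- the finitely $n$-presented modules appearing in the definition of $(n,d)$-injectivity are \emph{left} modules, so one must make sure the version of Eckmann--Shapiro being invoked is the one for left modules, and that Lemma~\ref{finitamentepresentados} is applied to left modules (which it is, as stated). Everything else is routine.
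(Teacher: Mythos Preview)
Your proof is correct and follows essentially the same approach as the paper: parts (1) and (2) are handled directly via the Eckmann--Shapiro isomorphisms together with Lemma~\ref{finitamentepresentados}, and parts (3) and (4) are deduced from (2) and (1) respectively using Lemma~\ref{sumandos directos} and closure of the $(n,d)$-injective class under direct summands. The paper's proof is the same in all essential details, only abbreviating part (2) to ``similar to (1)''.
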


\begin{proof}
(1) Take $F\in \mathcal{FP}_n(RG)$, then $F\d\in \mathcal{FP}_n(RH)$ by Lemma \ref{finitamentepresentados}. Since $M$ is an  $(n,d)$-injective $RH$-module, therefore $\Ext^{1+d}_{RH}(F\d,M)=0$. By the Eckmann-Shapiro lemma we have
  $\Ext^{1+d}_{RG}(F,M\u)\cong \Ext^{1+d}_{RH}(F\d,M)$
  and, in consequence, $M\u$  is an  $(n,d)$-injective $RG$-module.\\
(2) It is similar to (1). \\
(3) Let $M$ be an $RH$-module such that $M\u \in (n,d)$-$Inj(RG)$. Item (2) implies that $M\u\d$ is an $(n,d)$-injective $RH$-module. By Lemma \ref{sumandos directos} $M$ is isomorphic to a direct summand of $M \u\d$. Therefore, since the class $(n,d)$-$Inj(RH)$  is closed  under direct summands, it follows that $M$ is an  $(n,d)$-injective $RH$-module.\\
(4) Let $N$ be an $RG$-module such that $N\d \in (n,d)$-$Inj(RH)$. Item (1) implies that $N\d\u$ is an $(n,d)$-injective $RG$-module. By Lemma \ref{sumandos directos} $N$ is isomorphic to a direct summand of $N \downarrow _H^G \uparrow _H^G$. Therefore, since the class $(n,d)$-$Inj(RG)$  is closed  under direct summands, it follows that $N$ is an  $(n,d)$-injective $RG$-module.
\end{proof}
%

Given a left $R$-module $M$, recall that the character module or Pontryagin dual module is defined as the right $R$-module $M^+=\Ho_{\mathbb{Z}}(M, \mathbb{Q}/\mathbb{Z})$. Similarly, the character module of a right $R$-module is defined in the same way, and it is a left $R$-module that will be also denoted by $M^+$. It is a well known fact that an $R$-module  is flat if and only if its character module is injective as a right $R$-module.
More in general, the classes $(n,d)$-$Inj(R)$ and $(n,d)$-$Flat(R)$  relate well through the character modules. In fact by \cite[Proposition 2.3]{Zhou}, $M \in (n,d)$-$Flat(R)$ if and only if  $M^+\in (n,d)$-$Inj(R)$.\\ 

In addition, by \cite[Proposition III.4.14]{ARS} and \cite[Lemma 9.71]{Rotman}, the induction functor commutes with the character; i.e.

 \begin{eqnarray*}
(M^+)\u &=& RG \otimes_{RH} \Ho_{\mathbb{Z}}(M, \mathbb{Q}/\mathbb{Z}) \\
 & \cong & \Ho_{\mathbb{Z}}(\Ho_{RH}(RG,M), \mathbb{Q}/\mathbb{Z})  \\
 & \cong &  \Ho_{\mathbb{Z}}(RG \otimes_{RH} M , \mathbb{Q}/\mathbb{Z}) \\
 & = & (M\u)^+
\end{eqnarray*}

Analogously, the restriction functor also commutes with the character. \\

\begin{prop}\label{FPn-planos}
Let $H$ be a subgroup of $G$, $M$ be an $RH$-module and $N$ be an $RG$-module.

\begin{enumerate}
\item If $M$ is an  $(n,d)$-flat $RH$-module, then $M\u$  is an  $(n,d)$-flat $RG$-module.
\item If $N$ is an  $(n,d)$-flat $RG$-module, then $N\d$  is an  $(n,d)$-flat $RH$-module.
\item If $M\u$  is an  $(n,d)$-flat $RG$-module, then $M$ is an  $(n,d)$-flat $RH$-module.
\item If \sep and $N\d$  is an  $(n,d)$-flat $RH$-module, then $N$ is an  $(n,d)$-flat $RG$-module.
\end{enumerate}
\end{prop}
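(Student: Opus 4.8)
The plan is to mirror exactly the structure of the proof of Proposition~\ref{ndinjectives}, replacing the Eckmann--Shapiro lemma for $\Ext$ by the corresponding $\Tor$-isomorphism of Lemma~\ref{Tor} together with the fact that the induction and restriction functors commute with the character module $(-)^+$. First I would prove item (1): given a finitely $n$-presented right $RG$-module $F$, its restriction $F\d$ is a finitely $n$-presented right $RH$-module by (the right-module analogue of) Lemma~\ref{finitamentepresentados}; since $M$ is $(n,d)$-flat as an $RH$-module, $\Tor^{RH}_{1+d}(F\d,M)=0$, and Lemma~\ref{Tor} gives $\Tor^{RG}_{1+d}(F,M\u)\cong\Tor^{RH}_{1+d}(F\d,M)=0$, so $M\u$ is $(n,d)$-flat. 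Item (2) is the same computation with the roles of $G$ and $H$ exchanged, noting that for $N$ an $RG$-module one has $N\d\u$ restricting to $N$ appropriately; concretely, for $F\in\mathcal{FP}_n(RH)$ one has $F\u\in\mathcal{FP}_n(RG)$ and $\Tor^{RH}_{1+d}(F,N\d)\cong\Tor^{RG}_{1+d}(F\u,N)=0$.

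For items (3) and (4) I would not re-run the $\Tor$ computation but instead bootstrap from (1) and (2) exactly as in Proposition~\ref{ndinjectives}, using Lemma~\ref{sumandos directos} and the fact that $(n,d)$-$Flat(R)$ is closed under direct summands. For (3): if $M\u$ is $(n,d)$-flat over $RG$, then by (2) the module $M\u\d$ is $(n,d)$-flat over $RH$; by Lemma~\ref{sumandos directos}(1), $M$ is a direct summand of $M\u\d$, hence $(n,d)$-flat. For (4): assuming \sep, if $N\d$ is $(n,d)$-flat over $RH$, then by (1) $N\d\u$ is $(n,d)$-flat over $RG$; by Lemma~\ref{sumandos directos}(2), $N$ is a direct summand of $N\d\u$, hence $(n,d)$-flat.

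Alternatively, and perhaps more cleanly, all four items can be deduced directly from Proposition~\ref{ndinjectives} via the character-module dictionary: by \cite[Proposition 2.3]{Zhou}, $X\in(n,d)$-$Flat$ if and only if $X^+\in(n,d)$-$Inj$, and the displayed computation just above shows $(X^+)\u\cong(X\u)^+$ and, analogously, $(X^+)\d\cong(X\d)^+$. Thus, for instance, $M$ is $(n,d)$-flat over $RH$ iff $M^+$ is $(n,d)$-injective over $RH$ iff (by Proposition~\ref{ndinjectives}(1)) $(M^+)\u=(M\u)^+$ is $(n,d)$-injective over $RG$ iff $M\u$ is $(n,d)$-flat over $RG$; and similarly for the other three items. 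I would likely present this character-module argument as the main proof and mention the direct $\Tor$-computation as a remark, or vice versa.

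The routine points requiring a line of care are: that restriction and induction send finitely $n$-presented \emph{right} modules to finitely $n$-presented right modules (the same argument as Lemma~\ref{finitamentepresentados}, since $RG$ is also free as a right $RH$-module on both sides), and that the character functor genuinely intertwines with induction/restriction as displayed — this is already recorded in the excerpt. The only place where the separability hypothesis is needed is item (4), precisely because Lemma~\ref{sumandos directos}(2) (equivalently, the surjectivity needed to split $N\d\u\to N$, or the fact that $N$ is a summand of $N\d\u$) is the one statement in Lemma~\ref{sumandos directos} that requires \sep. I do not expect any real obstacle here; the whole proposition is formally parallel to Proposition~\ref{ndinjectives} and the main work has already been done in Lemmas~\ref{sumandos directos}, \ref{finitamentepresentados} and \ref{Tor} together with the character-module computation.
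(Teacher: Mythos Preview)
Your proposal is correct and largely matches the paper. Items (1), (3), and (4) are proved exactly as you describe: the direct $\Tor$ computation via Lemma~\ref{Tor} for (1), and the bootstrap via Lemma~\ref{sumandos directos} and closure under summands for (3) and (4).

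The one point of divergence is item (2). The paper does \emph{not} run the symmetric $\Tor$ computation you sketch; instead it uses precisely your ``alternative'' character-module argument: $N$ is $(n,d)$-flat $\Rightarrow$ $N^+$ is $(n,d)$-injective $\Rightarrow$ $(N\d)^+\cong(N^+)\d$ is $(n,d)$-injective by Proposition~\ref{ndinjectives}(2) $\Rightarrow$ $N\d$ is $(n,d)$-flat. This is not merely a stylistic choice: your direct approach to (2) invokes the isomorphism $\Tor^{RH}_{1+d}(F,N\d)\cong\Tor^{RG}_{1+d}(F\u,N)$, which is the \emph{other} adjunction-type isomorphism (induce the right variable, restrict the left) and is not the statement of Lemma~\ref{Tor} as recorded in the paper. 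It is of course true and equally easy, but would require a one-line justification rather than a bare citation. The character-module route sidesteps this by reducing to the already-proved injective case, which is why the paper prefers it for (2).
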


\begin{proof}
(1) Take $F$ a finitely $n$-presented right $RG$-module, then $F\d\in \mathcal{FP}_n(RH)$ by Lemma \ref{finitamentepresentados}. Since $M$ is an  $(n,d)$-flat $RH$-module, therefore $\Tor_{1+d}^{RH}(F\d,M)=0$. By Lemma \ref{Tor} we have
  $\Tor_{1+d}^{RG}(F,M\u)\cong \Tor_{1+d}^{RH}(F\d,M)$
  and, in consequence, $M\u$  is an  $(n,d)$-flat $RG$-module.\\
(2) Let $N$ be an  $(n,d)$-flat $RG$-module, by \cite[Proposition 2.3]{Zhou} $N^+$  is an  $(n,d)$-injective right $RG$-module. Follows from Proposition  \ref{ndinjectives} that  $(N\d)^+ \cong (N^+)\d $  is an  $(n,d)$-injective right $RH$-module. Therefore $N\d$  is an  $(n,d)$-flat $RH$-module. \\
(3) Let $M$ be an $RH$-module such that $M\u \in (n,d)$-$Flat(RG)$. Item (2) implies that $M\u\d$ is an $(n,d)$-flat $RH$-module. By Lemma \ref{sumandos directos} $M$ is isomorphic to a direct summand of $M \u\d$. Therefore, since the class $(n,d)$-$Flat(RH)$  is closed  under direct summands, it follows that $M$ is an  $(n,d)$-flat $RH$-module.\\
(4) Let $N$ be an $RG$-module such that $N\d \in (n,d)$-$Flat(RH)$. Item (1) implies that $N\d\u$ is an $(n,d)$-flat $RG$-module. Since $N$ is isomorphic to a direct summand of $N \downarrow _H^G \uparrow _H^G$ and the class $(n,d)$-$Flat(RG)$  is closed  under direct summands, it follows that $N$ is an  $(n,d)$-flat $RG$-module.
\end{proof}

Observe that, if $n=0$ and $d=0$ we recover  \cite[Proposition 2.2]{Xiang}. \\

%
%
%
%
%
%
%


In \cite{BGH} the class of cospiral $R$-modules is defined as those $R$-modules $M$ such that $\Ext^1_R(F,M)=0$ for all $(\infty,0)$-flat module $F$.


There are examples of rings over which the class of $(\infty,0)$-flat modules is strictly bigger than the class of $(n,0)$-flat modules, for some $n$.
For example if  $k$ is a field and we consider the polynomial ring

$$R=\dfrac{k\left[{x_{1},x_{2},x_{3},\ldots} \right] }{(x_{i}x_{j})_{{i,j}\geq1}}$$

according to \cite[Example 5.7]{BP} we have that

\begin{center}
  $(0,0)$-$Flat(R)=(1,0)$-$Flat(R)\subsetneq (2,0)$-$Flat(R)= (\infty,0)$-$Flat(R).$
\end{center}

Motivated by this, we present the following definition.

\begin{defi}
Let $R$ be a ring and $n\geq 0$ be  an integer. An $R$-module $M$  is said to be an \textit{$n$-cospiral module}  if $\Ext^1_R(F,M)=0$ for all  $(n,0)$-flat module $F$.
\end{defi}


An immediadiate observation is that if $R$ is a left $n$-coherent ring,  then the collection of all $n$-cospiral modules coincide with the collection of all cospiral modules by \cite[Theorem 5.6]{BP}.

Since the induction and restriction functors preserve $(n,d)$-flat modules by Proposition \ref{FPn-planos}, the following result may be proved in much the same way as above. \\

\begin{prop}
Let $H$ be a subgroup of $G$, $M$ be an $RH$-module and $N$ be an $RG$-module.

\begin{enumerate}
\item If $M$ is an  $n$-cospiral $RH$-module, then $M\u$  is an  $n$-cospiral $RG$-module.
\item If $N$ is an  $n$-cospiral $RG$-module, then $N\d$  is an  $n$-cospiral $RH$-module.
\item If $M\u$  is an  $n$-cospiral $RG$-module, then $M$ is an  $n$-cospiral $RH$-module.
\item If \sep and $N\d$  is an  $n$-cospiral $RH$-module, then $N$ is an  $n$-cospiral $RG$-module.
\end{enumerate}
\end{prop}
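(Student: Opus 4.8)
The statement is the exact analogue, with $n$-cospiral modules in place of $(n,d)$-injective or $(n,d)$-flat modules, of Propositions~\ref{ndinjectives} and~\ref{FPn-planos}, so the plan is to mimic those proofs verbatim, replacing the vanishing of $\Ext^{1+d}$ (or $\Tor_{1+d}$) against finitely $n$-presented modules by the vanishing of $\Ext^1$ against $(n,0)$-flat modules, and using Proposition~\ref{FPn-planos} to transport the test modules. The four items split into the two ``easy'' implications (1)--(2), where a module of one side is pushed to the other and stays $n$-cospiral, and the two ``direct summand'' implications (3)--(4), where one first applies the already-proved easy direction and then invokes Lemma~\ref{sumandos directos} together with the fact that the class of $n$-cospiral modules is closed under direct summands.

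First I would record the closure under direct summands: if $M$ is $n$-cospiral and $M\cong M'\oplus M''$, then for any $(n,0)$-flat $F$ the group $\Ext^1_R(F,M)\cong\Ext^1_R(F,M')\oplus\Ext^1_R(F,M'')$ vanishes, so each summand is $n$-cospiral. Next, for item (1): let $M$ be $n$-cospiral over $RH$ and let $F$ be an $(n,0)$-flat $RG$-module; by Proposition~\ref{FPn-planos}(2) $F\d$ is an $(n,0)$-flat $RH$-module, so $\Ext^1_{RH}(F\d,M)=0$, and by the Eckmann--Shapiro Lemma~\ref{E-S lemma} (case $n=1$, using $N=F$, and $M\d$ replaced by the restriction of the test module) one has $\Ext^1_{RG}(F,M\u)\cong\Ext^1_{RH}(F\d,M)=0$; hence $M\u$ is $n$-cospiral over $RG$. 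Here one should be slightly careful about which of the two Eckmann--Shapiro isomorphisms to use: since $M\u=RG\otimes_{RH}M\cong M\Uparrow_H^G=\Ho_{RH}(RG,M)$, we are computing $\Ext^1_{RG}(F,M\Uparrow_H^G)\cong\Ext^1_{RH}(F\d,M)$, which is exactly part~(2) of Lemma~\ref{E-S lemma} read with $M\u$ in the coinduced slot. Item (2) is the symmetric statement: for $N$ $n$-cospiral over $RG$ and $F$ an $(n,0)$-flat $RH$-module, Proposition~\ref{FPn-planos}(1) gives $F\u\in(n,0)\text{-}Flat(RG)$, and part~(1) of Lemma~\ref{E-S lemma} gives $\Ext^1_{RH}(F,N\d)\cong\Ext^1_{RG}(F\u,N)=0$, so $N\d$ is $n$-cospiral over $RH$.

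For (3), assume $M\u$ is $n$-cospiral over $RG$; by the already-established (2), $M\u\d$ is $n$-cospiral over $RH$, and by Lemma~\ref{sumandos directos}(1) $M$ is a direct summand of $M\u\d$, so $M$ is $n$-cospiral by the closure under summands. For (4), assume \sep \ and $N\d$ is $n$-cospiral over $RH$; by (1) $N\d\u$ is $n$-cospiral over $RG$, and by Lemma~\ref{sumandos directos}(2) $N$ is a direct summand of $N\d\u$, whence $N$ is $n$-cospiral over $RG$. I do not expect a genuine obstacle here; the one point that requires care — and is really the only thing to get right — is checking that the induction/coinduction identification $N\Uparrow_H^G\cong N\uparrow_H^G$ is being used consistently so that the correct half of the Eckmann--Shapiro Lemma is applied in items (1) and (2), since the $\Ext^1$ must land in the slot occupied by the induced module. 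Everything else is a formal transcription of the proofs of Propositions~\ref{ndinjectives} and~\ref{FPn-planos}.
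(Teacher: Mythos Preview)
Your proposal is correct and follows exactly the approach indicated by the paper, which does not give a detailed proof but simply remarks that since the induction and restriction functors preserve $(n,d)$-flat modules (Proposition~\ref{FPn-planos}), the result may be proved in the same way as Propositions~\ref{ndinjectives} and~\ref{FPn-planos}. Your write-up supplies precisely those details, including the correct application of the two halves of the Eckmann--Shapiro Lemma and the direct-summand argument via Lemma~\ref{sumandos directos}.
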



In \cite{Zhu} Zhu introduced the following two  dimensions of an $R$-module $M$ and the respective global dimensions of the ring $R$. .

\begin{defi}
Let $n\geq 0$ and $M$ an $R$-module. The \textit{left $(n,0)$-injective dimension} of $M$, which will be denote by $\nid_R(M)$, is given by the smallest integer $k\geq 0$ such that $\Ext^{k+1}_R(F,M)=0$ for every $F\in \mathcal{FP}_n(R)$. \

Similarly, the \textit{left $(n,0)$-flat dimension} of $M$, which will be denote by $\nfd_R(M)$, is given by the smallest integer $k\geq 0$ such that $\Tor_{k+1}^R(F,M)=0$ for every finitely $n$-presented right module $F$.
\end{defi}

The $n$-hereditary rings, for $n\geq 1$,  can be characterized using this new dimensions. In fact, by \cite[Theorem 24]{BPa}, a ring $R$ is a left  $n$-hereditary ring if and only if, for every $R$-module $M$, $\nfd_R(M)\leq 1$; and equivalently, for every $R$-module $M$, $\nid_R(M)\leq 1$.\\

\begin{obs}\label{sumandos directos ext}
If $N$ is an $R$-module with  $\nid_{R}(N)=k$ and $M$ is a direct summand of $N$, then for any $F\in \mathcal{FP}_n(R)$,  $\Ext^{k+1}_R(F,M)$ is a direct summand of $\Ext^{k+1}_R(F,N)=0$  and therefore $\nid_R(M)\leq k$.
\end{obs}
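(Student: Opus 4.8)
The plan is to use nothing more than additivity of $\Ext$ in its second argument with respect to finite direct sums, together with the definition of $\nid$. Since $M$ is a direct summand of $N$, we may write $N\cong M\oplus M'$ for some $R$-module $M'$. For any $F\in\mathcal{FP}_n(R)$ and any $j\geq 0$ the functor $\Ext^{j}_R(F,-)$ commutes with the finite direct sum $M\oplus M'$, so
$$\Ext^{k+1}_R(F,N)\cong \Ext^{k+1}_R(F,M)\oplus \Ext^{k+1}_R(F,M').$$
In particular $\Ext^{k+1}_R(F,M)$ is (isomorphic to) a direct summand of $\Ext^{k+1}_R(F,N)$.

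Now invoke the hypothesis $\nid_R(N)=k$: by the definition of the left $(n,0)$-injective dimension this gives $\Ext^{k+1}_R(F,N)=0$ for every $F\in\mathcal{FP}_n(R)$. Hence each direct summand of $\Ext^{k+1}_R(F,N)$ is zero as well, so $\Ext^{k+1}_R(F,M)=0$ for every $F\in\mathcal{FP}_n(R)$. Appealing once more to the definition of $\nid$, this says precisely that $\nid_R(M)\leq k$, which is the claim.

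I expect there to be no real obstacle here: the only point that needs to be recorded is that $\Ext$ respects finite biproducts in the contravariant-free slot, which is standard. (If one prefers, the same argument can be phrased via the split idempotent $e\colon N\to N$ with image $M$, which induces a split idempotent on $\Ext^{k+1}_R(F,N)=0$, forcing $\Ext^{k+1}_R(F,M)=0$.) No separability assumption or induction/restriction machinery is needed for this remark.
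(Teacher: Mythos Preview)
Your proof is correct and follows exactly the argument the paper gives: the remark itself already contains its proof, namely that $\Ext^{k+1}_R(F,M)$ is a direct summand of $\Ext^{k+1}_R(F,N)=0$ by additivity of $\Ext$ in the second variable, whence $\nid_R(M)\leq k$. You have simply spelled out this one-line observation in more detail.
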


\begin{defi}
Let $n\geq 0$ and $M$ an $R$-module. The \textit{left $n$-cospiral dimension} of $M$, which will be denote by $\ncd_R(M)$, is given by the smallest integer $k\geq 0$ such that $\Ext^{k+1}_R(F,M)=0$ for every $F\in (n,0)$-$Flat(R)$.
\end{defi}

This dimension characterizes the  $n$-cospiral modules. In fact, an $R$-module $M$ is an $n$-cospiral module if and only if $\ncd_R(M)=0$. \\

Some elementary properties of these dimensions are established by our next proposition.

\begin{prop}\label{fpninyflat}
Let $H$ be a subgroup of $G$, $M$ be an $RH$-module and $N$ be an $RG$-module.
\begin{enumerate}
  \item $\nid_{RH}(M)=\nid_{RG}(M\u)$.
  \item $\nid_{RG}(N)\geq \nid_{RH}(N\d)$ and when \sep  \ \ the equality holds.
  \item $\nfd_{RH}(M)=\nfd_{RG}(M\u)$.
  \item $\nfd_{RG}(N)\geq\nfd_{RH}(N\d)$ and when \sep  \ \ the equality holds.
  \item $\ncd_{RH}(M)= \ncd_{RG}(M\u)$.
  \item $\ncd_{RG}(N)\geq\ncd_{RH}(N\d)$ and when \sep  \ \ the equality holds.
\end{enumerate}
\end{prop}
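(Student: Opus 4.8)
The plan is to prove all six items from a single template. For the ``induction'' equalities (1), (3), (5) I will prove two inequalities, one in each direction; for the ``restriction'' items (2), (4), (6) I will prove the inequality in all cases and then upgrade it to an equality under the separability hypothesis, using that $N$ is then a direct summand of $N\d\u$. The two mechanisms at work are: (i) a Shapiro-type vanishing — transport the test module with $\u$ or $\d$ (these preserve $\mathcal{FP}_n$ by Lemma~\ref{finitamentepresentados} and preserve $(n,0)$-flat modules by Proposition~\ref{FPn-planos}) and then move the $\Ext$ or $\Tor$ across via Eckmann--Shapiro (Lemma~\ref{E-S lemma}) or Lemma~\ref{Tor}; and (ii) the direct-summand trick of Remark~\ref{sumandos directos ext} combined with Lemma~\ref{sumandos directos}.

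For $\nid$: if $\nid_{RH}(M)=k$, then for any $F\in\mathcal{FP}_n(RG)$ we have $F\d\in\mathcal{FP}_n(RH)$, so $\Ext^{k+1}_{RG}(F,M\u)\cong\Ext^{k+1}_{RH}(F\d,M)=0$ by Eckmann--Shapiro; hence $\nid_{RG}(M\u)\le\nid_{RH}(M)$. Symmetrically, if $\nid_{RG}(N)=k$, then for $F\in\mathcal{FP}_n(RH)$ we have $F\u\in\mathcal{FP}_n(RG)$ and $\Ext^{k+1}_{RH}(F,N\d)\cong\Ext^{k+1}_{RG}(F\u,N)=0$, so $\nid_{RH}(N\d)\le\nid_{RG}(N)$; this is the inequality in (2). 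To finish (1), apply this last inequality to $N=M\u$: $\nid_{RH}(M\u\d)\le\nid_{RG}(M\u)$; since $M$ is a direct summand of $M\u\d$ by Lemma~\ref{sumandos directos}(1), Remark~\ref{sumandos directos ext} gives $\nid_{RH}(M)\le\nid_{RH}(M\u\d)\le\nid_{RG}(M\u)$, and combining with the first inequality yields equality. To finish (2) under separability, apply the first inequality to $M=N\d$: $\nid_{RG}(N\d\u)\le\nid_{RH}(N\d)$; since $N$ is a direct summand of $N\d\u$ by Lemma~\ref{sumandos directos}(2), Remark~\ref{sumandos directos ext} gives $\nid_{RG}(N)\le\nid_{RG}(N\d\u)\le\nid_{RH}(N\d)$, hence equality.

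Parts (3) and (4) run along the same lines with $\Tor$ in place of $\Ext$ and Lemma~\ref{Tor} in place of Eckmann--Shapiro; equivalently, and perhaps more transparently, one reduces them to (1) and (2) via character modules, using the identity $\nfd_R(X)=\nid_R(X^+)$ (valid because $\Tor^R_{k+1}(F,X)^+\cong\Ext^{k+1}_R(F,X^+)$ and a module is zero iff its character module is) together with the commutations $(M^+)\u\cong(M\u)^+$ and $(N^+)\d\cong(N\d)^+$ already recorded before Proposition~\ref{FPn-planos}, and the right-module analogues of (1)--(2). Parts (5) and (6) are, once more, the same argument, now testing against $(n,0)$-$Flat$ instead of $\mathcal{FP}_n$: here one invokes Proposition~\ref{FPn-planos} (so that $\u$ and $\d$ carry $(n,0)$-flat modules to $(n,0)$-flat modules) together with Eckmann--Shapiro, while the direct-summand step and the separability upgrade are unchanged. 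I do not expect a genuine obstacle; the only care needed is bookkeeping — choosing the correct one of the two forms of Lemma~\ref{E-S lemma} (and of Lemma~\ref{Tor}) so the transported test module always lands in the right variable, and, in (3)--(4), tracking the left/right-module distinction when passing to character modules.
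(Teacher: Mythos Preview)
Your proof is correct and follows essentially the same strategy as the paper's: Eckmann--Shapiro (or Lemma~\ref{Tor}) together with Lemma~\ref{finitamentepresentados}/Proposition~\ref{FPn-planos} for the transport step, and Lemma~\ref{sumandos directos} with Remark~\ref{sumandos directos ext} for the direct-summand step. The only cosmetic difference is that the paper obtains the reverse inequality in (1) by exhibiting a witness $F_i\in\mathcal{FP}_n(RH)$ with nonvanishing $\Ext^{k}$ and transporting it via $F_i\u$ (so the summand trick is applied in the first $\Ext$-variable), whereas you deduce it from the inequality of (2) applied to $N=M\u$ (summand trick in the second variable); the two are interchangeable.
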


\begin{proof}
It suffices to prove (1) and (2).  The others items follow in the same way. \\
$(1)$ Take $F\in \mathcal{FP}_n(RG)$ then $F\d\in \mathcal{FP}_n(RH)$ by Lemma \ref{finitamentepresentados}. Assume that $\nid_{RH}(M)=k$, therefore $\Ext^{k+1}_{RG}(F,M\u)\cong\Ext^{k+1}_{RH}(F\d,M)=0$ and, if $ -k\leq i\leq 0$ there exits $F_i\in \mathcal{FP}_n(RH)$ such that  $\Ext^{k+i}_{RH}(F_i,M)\neq 0$. Then  $\Ext^{k+i}_{RH}(F_i\u\d,M)\neq 0$ since $F_i$ is a direct summand of $F_i\u\d$. Finally,  for all $ -k\leq i\leq 0$,  the module $F_i\u $ is a finitely $n$-presented  $RG$-module such that 
$\Ext^{k+i}_{RG}(F_i\u,M\u)\cong \Ext^{k+i}_{RH}(F\u\d,M)\neq 0 $. In consequence, $\nid_{RG}(M\u)$ is exactly $k$. \\
(2) Assume that $\nid_{RG}(N)=k$, therefore, for every $F\in \mathcal{FP}_n(RH)$, $F\u\in \mathcal{FP}_n(RG)$ and  $\Ext^{k+1}_{RH}(F,N\d)\cong \Ext^{k+1}_{RG}(F\u,N)=0$. Then $\nid_{RH}(N\d)\leq k$. Follows from item (1) that $\nid_{RG}(N\d\u)= \nid_{RH}(N\d)$. Moreover, if  \sep ;  $\nid_{RG}(N)\leq \nid_{RH}(N\d\u)$.

\end{proof}


All definitions and results stated  in this section will be also valid for right $R$-modules. In particular, we can compute the $(1,0)$-injective dimension of a right $R$-module $M$, that we will denote $\1id(M_R)$,  and Proposition \ref{fpninyflat} holds. \\

 In \cite{DC1} and \cite{DC2}  Ding and Chen extended FC rings to $n$-FC rings.
 A ring $R$ is called  an \textit{$n$-FC ring} if it is both left and right coherent and  $\1id(R_R)=\1id(R)=n$.
 By  \cite[Corollary 3.18]{DC1}  if $R$ is both left and right coherent, and $\1id(R_R)$ and $\1id(R)$ are both finite, then $\1id(R_R)=\1id(R)$. Often  we are not interested in the particular value of $n$; then if $R$ is an $n$-FC ring for some $n\geq 0$, $R$ is called  a \textit{Ding-Chen ring}. The name is due to  Gillespie \cite{Gillespie}. Examples of Ding-Chen rings include all von Neumann regular rings and coherent rings of finite weak dimension.\\

Observe that the above proposition implies that $\1id_R(R)=\1id_{RG}(RG)$ (as right and left modules over themselves).  Then, a direct consequence of Proposition \ref{Coherent}  and that fact is the following corollary.

\begin{coro}\label{Ding-Chen ring}
Let $R$ be a ring and let $G$ be a finite group. Then, $RG$ is a Ding-Chen ring if and only if $R$ is a Ding-Chen ring.
\end{coro}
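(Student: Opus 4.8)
The plan is to reduce the statement to what has already been established, exploiting that the order of a finite group $G$ has no a priori relation to invertibility hypotheses here, so that the two implications are not symmetric in flavour. First I would recall that $RG = R[G]$ requires the action to be trivial; but in the corollary the action is arbitrary, so I must work with $R$ as a subring of $RG$ via the trivial subgroup $H = \{1\}$. Note that $RH = R\cdot 1 \cong R$, and $RG$ is free of finite rank $|G|$ as a left and right $R$-module, so $RG$ is in particular a finitely presented $R$-module. This is the key structural fact that makes Proposition~\ref{Coherent} applicable with $H$ trivial: by that proposition, $RG$ is left (and right) coherent if and only if $R$ is left (resp. right) coherent. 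So one half of the $n$-FC condition transfers between $R$ and $RG$ with no hypothesis on $|G|$ whatsoever.

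The second half is the equality of the one-sided $(1,0)$-injective self-dimensions. Here I would invoke the remark preceding the corollary: Proposition~\ref{fpninyflat}, applied with $H$ trivial and $M = RH = R$, gives $\1id_R(R) = \1id_{RG}(R\u) = \1id_{RG}(RG \otimes_R R) = \1id_{RG}(RG)$, and likewise on the right side using the right-module version of Proposition~\ref{fpninyflat} noted in the text just before the corollary. Thus $\1id_{RG}(RG) = \1id_R(R)$ and $\1id_{RG}(({}RG)_{RG}) = \1id((R)_R)$ as left- and right-module self-injective-type dimensions, respectively. Combining: $R$ is both left and right coherent with $\1id(R_R) = \1id(R) = n < \infty$ if and only if $RG$ is both left and right coherent with $\1id((RG)_{RG}) = \1id(RG) = n < \infty$. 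Since being a Ding-Chen ring means being an $n$-FC ring for \emph{some} $n \geq 0$, and the value of $n$ is literally preserved by the above equalities, the equivalence follows at once: $RG$ is Ding-Chen $\iff$ $R$ is Ding-Chen.

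The only subtlety I would be careful about is checking that Proposition~\ref{fpninyflat} genuinely applies to $R$ regarded as an $RH$-module with $H$ trivial, i.e.\ that all the lemmas it rests on (Eckmann-Shapiro, Lemma~\ref{Tor}, Lemma~\ref{sumandos directos}, and the preservation of $\mathcal{FP}_n$ under induction/restriction) hold in this degenerate case. They do: $H = \{1\}$ is a subgroup, $RH = R$ is a subring of $RG$, and $RG$ is free as a left and right $RH$-module of finite rank $|G|$, which is exactly the hypothesis used throughout Sections 1--4. One also needs the separability hypothesis $RG/RH$ nowhere, since the statements $\1id_R(R) = \1id_{RG}(RG)$ are the \emph{equality} clauses of Proposition~\ref{fpninyflat}(1),(3) for the \emph{induced} module $M\u$, which hold unconditionally; separability is only needed for statements about \emph{restricted} modules $N\d$. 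I expect the main (mild) obstacle to be nothing more than bookkeeping: making sure the two-sided nature of the $n$-FC condition is tracked correctly, i.e.\ that the left-coherence of $RG$ corresponds to left-coherence of $R$ and likewise on the right, and that $\1id(R_R)$ (right) and $\1id(R)$ (left) are each matched with the correct one-sided self-dimension of $RG$. Once that matching is explicit, the proof is immediate, and it is worth remarking that this recovers Gillespie's result that $R$ Ding-Chen implies $R[G]$ Ding-Chen for commutative $R$, now without commutativity and in both directions.
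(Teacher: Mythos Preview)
Your proposal is correct and follows essentially the same approach as the paper: the authors' argument is simply the observation (stated just before the corollary) that Proposition~\ref{fpninyflat}(1) with $H=\{1\}$ and $M=R$ yields $\1id_R(R)=\1id_{RG}(RG)$ on both sides, combined with Proposition~\ref{Coherent} for the transfer of two-sided coherence. Your write-up merely makes explicit the bookkeeping (choice of $H$, why separability is not needed, matching left/right) that the paper leaves implicit.
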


\section{Corresponding global dimensions}

The study of the global dimensions of categories of modules is important
for the homological classification of modules and rings. Now we apply the previous results to investigate homological global dimensions of the skew group ring $RG$ by using the induced and restricted functors.\\

Recall that the left \textit{$(n,0)$-injective global dimension} of a ring $R$, which will be denote by $\gid(R)$, is defined by

$$\gid(R)= \sup \{\nid_R(M)\mid M \text{ is a left $R$-module} \}.$$
Analogously, the left \textit{$(n,0)$-flat global dimension} of a ring $R$, which will be denote by $\gfd(R)$, is defined by
$$\gfd(R)= \sup \{\nfd_R(M)\mid M \text{ is a left $R$-module} \}.$$

In the same way,  the left \textit{$n$-cospiral global dimension} of a ring $R$, which will be denote by $\gcd(R)$, is defined by

$$\gcd(R)= \sup \{\ncd_R(M)\mid M \text{ is a left $R$-module} \}.$$

\vspace*{0.5cm}

Here we mention two important consequences of  Proposition \ref{fpninyflat}.

\begin{coro}\label{globaliny-flatcotdim}
Let $H$ be a subgroup of $G$. Then,
\begin{enumerate}
\item $\gid(RH)\leq \gid(RG)$ and when \sep  \ \ the equality holds.
\item $\gfd(RH)\leq \gfd(RG)$ and when \sep  \ \ the equality holds.
\item $\gcd(RH)\leq \gcd(RG)$ and when \sep  \ \ the equality holds.
\end{enumerate}
\end{coro}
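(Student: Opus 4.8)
The plan is to obtain all three statements from Proposition \ref{fpninyflat} simply by taking suprema over all modules. Consider item (1). To prove $\gid(RH)\leq\gid(RG)$, I would fix an arbitrary left $RH$-module $M$; by Proposition \ref{fpninyflat}(1) we have $\nid_{RH}(M)=\nid_{RG}(M\u)$, and since $M\u$ is a left $RG$-module the right-hand side is at most $\gid(RG)$. As $M$ was arbitrary, passing to the supremum gives $\gid(RH)\leq\gid(RG)$ (the inequality being vacuous if $\gid(RG)=\infty$). For the reverse inequality, assuming that $RG$ is a separable extension over $RH$, I would fix an arbitrary left $RG$-module $N$; Proposition \ref{fpninyflat}(2) then gives $\nid_{RG}(N)=\nid_{RH}(N\d)\leq\gid(RH)$, and taking the supremum over $N$ yields $\gid(RG)\leq\gid(RH)$. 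Together with the first inequality this forces equality.

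Items (2) and (3) are proved in exactly the same manner, replacing $\nid$, $\gid$ by $\nfd$, $\gfd$ (and using parts (3)--(4) of Proposition \ref{fpninyflat}) for item (2), and by $\ncd$, $\gcd$ (using parts (5)--(6)) for item (3). In each case the unconditional inequality only uses that induction carries $RH$-modules to $RG$-modules without changing the dimension in question, while the equality under separability uses that restriction carries $RG$-modules to $RH$-modules and that, by Lemma \ref{sumandos directos}, separability makes $N$ a direct summand of $N\d\u$, which is what makes the restriction lossless on these dimensions.

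I do not expect any genuine obstacle: the whole content already sits in Proposition \ref{fpninyflat}, and the corollary is the formal consequence of applying $\sup$ to both sides of the (in)equalities established there. The only subtlety worth a remark is that the direction $\gid(RH)\leq\gid(RG)$ does \emph{not} require every $RH$-module to arise as the restriction of an $RG$-module, whereas the reverse direction genuinely needs the separability hypothesis through the direct-summand property of $N\d\u$.
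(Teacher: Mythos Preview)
Your proposal is correct and follows essentially the same approach as the paper: the paper's proof simply states that Proposition~\ref{fpninyflat}(1) yields $\gid(RH)\leq\gid(RG)$ and that part~(2) yields the reverse inequality under separability, with items (2) and (3) handled analogously. Your write-up spells out the supremum argument more explicitly, but the content and logical route are identical.
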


\begin{proof}
Proposition \ref{fpninyflat}-(1) implies that $\gid(RH)\leq \gid(RG)$ whereas that (2) gives  $\gid(RG)\leq \gid(RH)$ when \sep . In the same way follow (2) and (3).
\end{proof}

\begin{coro}\label{globaliny-flatcotdimordeninvertible}
Let $R$ be a ring and let $G$ be a finite  group whose order is invertible in $R$. Then,
\begin{enumerate}
\item $\gid(R)= \gid(RG)$.
\item $\gfd(R)= \gfd(RG)$.
\item $\gcd(R)= \gcd(RG)$.
\end{enumerate}
\end{coro}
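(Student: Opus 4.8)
The statement to prove is Corollary \ref{globaliny-flatcotdimordeninvertible}, which asserts that for a finite group $G$ whose order is invertible in $R$, the three global dimensions $\gid$, $\gfd$ and $\gcd$ of $R$ and of $RG$ coincide. The natural approach is to specialize Corollary \ref{globaliny-flatcotdim} to the case where the subgroup $H$ is the trivial group $\{1\}$. With this choice we have $RH = R\{1\} = R$, so the skew group ring over the trivial subgroup is literally $R$ itself (the action of the trivial group is trivial), and the induction/restriction functors become the usual scalar extension and restriction along the inclusion $R \hookrightarrow RG$.

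\textbf{Key steps.} First I would invoke the hypothesis: since $|G|$ is invertible in $R$, the discussion in the Separable Extensions subsection (in particular the consequence of \cite[Proposition 3.6]{Li}, taking $H = \{1\}$ so that $[G:H] = |G|$ is invertible) gives that $RG$ is a separable extension over $R = RH$. Thus the ``when \sep'' clause in Corollary \ref{globaliny-flatcotdim} is satisfied. Second, I would simply apply the three equalities of Corollary \ref{globaliny-flatcotdim} with $H = \{1\}$: part (1) yields $\gid(R) = \gid(RG)$, part (2) yields $\gfd(R) = \gfd(RG)$, and part (3) yields $\gcd(R) = \gcd(RG)$. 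That is the entire argument; the three items of the corollary follow at once.

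\textbf{Main obstacle.} There is essentially no obstacle here — the corollary is a direct specialization. The only point requiring a moment's care is the bookkeeping identification $R\{1\} = R$ together with the verification that the separability hypothesis of the general statement is met; both are immediate from the preliminary material. One could also phrase the proof as an induction-free direct consequence of Proposition \ref{fpninyflat} applied with $H = \{1\}$, but routing through Corollary \ref{globaliny-flatcotdim} is the cleanest presentation.

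\begin{proof}
Take $H=\{1\}$ the trivial subgroup of $G$. Then $RH=R$, and the induction and restriction functors between $RH\text{-}\Mod$ and $RG\text{-}\Mod$ are the usual scalar extension and restriction along $R\hookrightarrow RG$. Since the order of $G$ is invertible in $R$, the index $[G:H]=|G|$ is invertible in $R$, so by \cite[Proposition 3.6]{Li} the skew group ring $RG$ is a separable extension over $RH=R$. Hence the hypothesis \sep \ holds, and the three equalities follow directly from Corollary \ref{globaliny-flatcotdim}: item (1) gives $\gid(R)=\gid(RG)$, item (2) gives $\gfd(R)=\gfd(RG)$, and item (3) gives $\gcd(R)=\gcd(RG)$.
\end{proof}
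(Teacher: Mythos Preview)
Your proposal is correct and follows exactly the paper's intended route: the corollary is stated immediately after Corollary \ref{globaliny-flatcotdim} with no explicit proof, because it is the specialization $H=\{1\}$ together with the separability of $RG$ over $R$ granted by the invertibility of $|G|$ in $R$, precisely as you argue.
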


%
%


\section{$n$-cotorsion modules.}

In \cite{MD} Mao and Ding introduced the notion of $n$-cotorsion module. We recall the definition here.
\begin{defi}
Let $R$ be a ring and $n\geq 0$ be  an integer. An $R$-module $M$  is said to be an \textit{$n$-cotorsion module}  if  $\Ext^1_R(F,M)=0$ for all $R$-module $F$ with flat dimension at most $n$.
\end{defi}

Observe that the $0$-cotorsion modules  coincide with the classic cotorsion modules defined by Enochs.

\begin{prop}\label{cotorsion}
Let $H$ be a subgroup of $G$, $M$ be an $RH$-module and $N$ be an $RG$-module.

\begin{enumerate}
\item If $M$ is an  $n$-cotorsion $RH$-module, then $M\u$  is an  $n$-cotorsion $RG$-module.
\item If $N$ is an  $n$-cotorsion $RG$-module, then $N\d$  is an  $n$-cotorsion $RH$-module.
\item If $M\u$  is an  $n$-cotorsion $RG$-module, then $M$ is an  $n$-cotorsion $RH$-module.
\item If \sep and $N\d$  is an  $n$-cotorsion $RH$-module, then $N$ is an  $n$-cotorsion $RG$-module.
\end{enumerate}
\end{prop}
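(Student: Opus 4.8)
The plan is to mimic exactly the pattern established in Propositions \ref{ndinjectives} and \ref{FPn-planos}, since the class of $n$-cotorsion modules is defined by an $\Ext^1$-vanishing condition against a class of modules (those of flat dimension at most $n$) that behaves well under induction and restriction. The two structural facts I would invoke are: that the class of $n$-cotorsion modules is closed under direct summands (immediate from the same argument as Remark \ref{sumandos directos ext}, since $\Ext^1_R(F,-)$ carries direct summands to direct summands), and that both functors $\u$ and $\d$ send a module of flat dimension $\leq n$ to a module of flat dimension $\leq n$ — this follows from Lemma \ref{dp} applied to the character module, or more directly from the fact that $\u$ and $\d$ are exact and preserve flat modules (as recorded in Section 1, citing \cite{Xiang}), so they preserve flat resolutions of length $\leq n$.

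For item (1), I would take any $RG$-module $F$ with $\fd_{RG}(F)\leq n$; then $F\d$ is an $RH$-module with $\fd_{RH}(F\d)\leq n$, and since $M$ is $n$-cotorsion over $RH$ we get $\Ext^1_{RH}(F\d,M)=0$. By the Eckmann-Shapiro Lemma \ref{E-S lemma}(2), $\Ext^1_{RG}(F,M\u)\cong\Ext^1_{RH}(F\d,M)=0$, so $M\u$ is $n$-cotorsion over $RG$. Item (2) is the symmetric computation: for an $RH$-module $F$ with $\fd_{RH}(F)\leq n$, one has $\fd_{RG}(F\u)\leq n$ since $\u$ preserves flatness and exactness, and then $\Ext^1_{RH}(F,N\d)\cong\Ext^1_{RG}(F\u,N)=0$ by Eckmann-Shapiro \ref{E-S lemma}(1). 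Items (3) and (4) then follow by the standard sandwiching trick: in (3), $M\u$ $n$-cotorsion over $RG$ gives, by (2), that $M\u\d$ is $n$-cotorsion over $RH$, and $M$ is a direct summand of $M\u\d$ by Lemma \ref{sumandos directos}(1), so $M$ is $n$-cotorsion; in (4), under the separability hypothesis $N$ is a direct summand of $N\d\u$ by Lemma \ref{sumandos directos}(2), and $N\d\u$ is $n$-cotorsion over $RG$ by (1), whence $N$ is $n$-cotorsion.

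The only point requiring a moment of care — and the nearest thing to an obstacle — is the claim that restriction and induction do not increase flat dimension. For restriction this is clear because a flat $RG$-module is flat over $RH$ (as $RG$ is free, hence flat, over $RH$), so a length-$n$ flat $RG$-resolution of $F$ restricts to a length-$n$ flat $RH$-resolution of $F\d$, using exactness of $\d$. For induction the same argument applies because $RG\otimes_{RH}-$ is exact and sends flat $RH$-modules to flat $RG$-modules. Alternatively one can phrase everything through the identity $\fd_R(F)=\1id\text{-type}$ duality via character modules together with Proposition \ref{ndinjectives}, but the direct resolution argument is cleaner here. Once this is in hand, the proof is a verbatim adaptation of the earlier propositions, so I would simply write: "It follows verbatim as in the proof of Proposition \ref{FPn-planos}, replacing the class of finitely $n$-presented modules by the class of modules of flat dimension at most $n$ and using that the induction and restriction functors preserve this latter class together with the Eckmann-Shapiro Lemma \ref{E-S lemma}."
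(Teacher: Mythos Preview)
Your proposal is correct and follows exactly the approach of the paper, which gives only the terse one-line proof ``It is sufficient to use Eckmann-Shapiro's Lemma together with the observation that the induction and restriction functors are exacts and preserve flat modules.'' You have simply unpacked this sentence in full detail, including the direct-summand arguments for (3) and (4) via Lemma~\ref{sumandos directos}, which the paper leaves implicit by analogy with Propositions~\ref{ndinjectives} and~\ref{FPn-planos}.
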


\begin{proof}
It is sufficient to use Eckmann-Shapiro's Lemma together with the observation that the induction and restriction functors are exacts and preserve flat modules.
\end{proof}

Observe that, if $n=0$ we recover  \cite[Proposition 2.3]{Xiang}. Futhermore, in this case Xiang showed that the cotorsion global dimension of $R$ is less than or equal the cotorsion global dimension of $RG$ \cite[Theorem 2.7]{Xiang}.

\begin{defi}
Let $n\geq 0$ and $M$ an $R$-module. The \textit{ left $n$-cotorsion dimension} of $M$, which will be denote by $\nctd_R(M)$, is given by the smallest integer $k\geq 0$ such that $\Ext^{k+1}_R(F,M)=0$ for every $R$-module $F$ with flat dimension at most $n$.
\end{defi}

A direct consequence of Lemma~\ref{sumandos directos} and the fact that the induction and restriction functors are exacts
and preserves flat modules is the following lemma.

\begin{lema}\label{df}
Let $M$ be an $RH$-module and $N$ be an $RG$-module. Then,  $\wdH(M)=\wdG(M\u)$; $\wdH(N\d)\leq\wdG(N)$;  and when \sep  \ \ the equality holds.
\end{lema}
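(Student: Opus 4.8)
The plan is to mimic the strategy used for Lemma~\ref{dp}, replacing projective dimension with weak (flat) dimension and projective modules with flat modules throughout. The three ingredients are: (i) the induction and restriction functors $\u$ and $\d$ are exact; (ii) by \cite[Proposition 2.2]{Xiang} they both preserve flat modules (indeed $RG$ is free, hence flat, as a left and right $RH$-module); and (iii) Lemma~\ref{sumandos directos}, which says $M$ is a direct summand of $M\u\d$ always, and $N$ is a direct summand of $N\d\u$ when \sep.

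First I would prove $\wdH(M)=\wdG(M\u)$. To get $\wdG(M\u)\le\wdH(M)$, take a flat resolution of $M$ over $RH$ of length $\wdH(M)=:k$; applying the exact functor $\u$, which preserves flat modules, yields a flat resolution of $M\u$ over $RG$ of length $k$, so $\wdG(M\u)\le k$. For the reverse inequality, apply the exact, flat-preserving functor $\d$ to a flat resolution of $M\u$ over $RG$ of length $\wdG(M\u)$; this gives a flat resolution of $M\u\d$ over $RH$ of the same length, so $\wdH(M\u\d)\le\wdG(M\u)$. Since $M$ is a direct summand of $M\u\d$ by Lemma~\ref{sumandos directos}, and flat dimension does not increase on passing to direct summands, $\wdH(M)\le\wdH(M\u\d)\le\wdG(M\u)$. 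Combining the two bounds gives the equality. The inequality $\wdH(N\d)\le\wdG(N)$ is the same first-type argument: apply $\d$ to a flat resolution of $N$ over $RG$. Finally, when \sep, $N$ is a direct summand of $N\d\u$ by Lemma~\ref{sumandos directos}(2), so using part (i) of the already-proved equality with $M=N\d$ we get $\wdG(N\d\u)=\wdH(N\d)$, hence $\wdG(N)\le\wdG(N\d\u)=\wdH(N\d)$, which together with $\wdH(N\d)\le\wdG(N)$ yields equality.

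There is really no serious obstacle here; the statement is a direct transcription of Lemma~\ref{dp} with ``projective'' replaced by ``flat'' everywhere, and every step used in the proof of Lemma~\ref{dp} has its weak-dimension counterpart available: exactness of the functors is stated in the preliminaries, preservation of flat modules is \cite[Proposition 2.2]{Xiang}, and the direct-summand statements are Lemma~\ref{sumandos directos}. The only mild point to keep in mind is that one should phrase the direct-summand step in terms of flat resolutions and the fact that $\fd$ of a direct summand is at most the $\fd$ of the whole module (which follows since $\Tor$ commutes with direct sums/summands), rather than invoking anything about syzygies. For brevity the write-up can simply say ``the proof is identical to that of Lemma~\ref{dp}, using that $\u$ and $\d$ preserve flat modules instead of projective ones.''
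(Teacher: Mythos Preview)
Your proposal is correct and follows exactly the approach the paper intends: the paper does not spell out a proof but simply declares the lemma to be ``a direct consequence of Lemma~\ref{sumandos directos} and the fact that the induction and restriction functors are exact and preserve flat modules,'' which is precisely the three ingredients (i)--(iii) you identify and use. Your suggested one-line write-up (``identical to Lemma~\ref{dp}, replacing projective by flat'') matches the paper's level of detail.
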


We now apply the same arguments of the proof of Proposition \ref{fpninyflat} to obtain the following result about the $n$-cotorsion dimension.

\begin{prop}
Let $H$ be a subgroup of $G$, $M$ be an $RH$-module and $N$ be an $RG$-module. Then,
\begin{enumerate}

  \item $\nctd_{RH}(M)=\nctd_{RG}(M\u)$
  \item $\nctd_{RG}(N)\geq \nctd_{RH}(N\d)$ and when \sep  \ \ the equality holds.

\end{enumerate}
\end{prop}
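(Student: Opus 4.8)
The plan is to mimic, essentially verbatim, the argument used for Proposition~\ref{fpninyflat}, replacing the role played there by the Eckmann--Shapiro Lemma (for $\Ext$ against finitely $n$-presented modules) with the Eckmann--Shapiro Lemma applied to $\Ext$ against modules of bounded flat dimension, and replacing Lemma~\ref{finitamentepresentados} (preservation of finitely $n$-presented modules) with Lemma~\ref{df} (preservation of weak/flat dimension under induction and restriction). Concretely: for (1), fix an $RH$-module $F$ with $\wdH(F)\le n$. By Lemma~\ref{df}, $\wdG(F\u)\le n$, so $F\u$ is admissible in the definition of $\nctd_{RG}$; moreover by Lemma~\ref{sumandos directos}-(1) every such $F$ is a direct summand of $F\u\d$, which also has flat dimension $\le n$ by Lemma~\ref{df}. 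The Eckmann--Shapiro isomorphism $\Ext^{k+1}_{RG}(F\u,M\u)\cong\Ext^{k+1}_{RH}(F,M\u\d)$ is not quite what we want; rather we use $\Ext^{k+1}_{RH}(F,M)\cong\Ext^{k+1}_{RG}(F\u,M\u)$ directly from Lemma~\ref{E-S lemma}-(1) with $N=F$ viewed as an $RH$-module and $M\u$ the $RG$-module (note $(M\u)\d$ is the restriction, and one uses the isomorphism in the form $\Ext^n_{RH}(F,(M\u)\d)\cong\Ext^n_{RG}(F\u,M\u)$ together with the fact that $M$ is a direct summand of $(M\u)\d$).

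**Details of the two inequalities.** For $\nctd_{RH}(M)\ge\nctd_{RG}(M\u)$: if $\nctd_{RH}(M)=k$, then for every $F$ with $\wdH(F)\le n$ we get $\Ext^{k+1}_{RH}(F,M)=0$; given $F'$ an $RH$-module of flat dimension $\le n$, Eckmann--Shapiro gives $\Ext^{k+1}_{RG}(F'\u, M\u)\cong\Ext^{k+1}_{RH}(F', (M\u)\d)$, and since $M$ is a direct summand of $(M\u)\d$ and $(M\u)\d$ has flat dimension $\le n$, one needs to know $\Ext^{k+1}_{RH}(F',(M\u)\d)=0$; this holds because $(M\u)\d$ is a direct summand of $M^{[G:H]}$ (as the restriction-induction is a sum of twisted copies), so its $(k{+}1)$-st $\Ext$ out of $F'$ is a summand of $\Ext^{k+1}_{RH}(F',M)^{[G:H]}=0$. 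One must also exhibit, for each $0\le -i\le k$, a flat-dimension-$\le n$ module over $RG$ witnessing nonvanishing of $\Ext^{k+i}$; take $F_i$ over $RH$ with $\Ext^{k+i}_{RH}(F_i,M)\ne0$, note $F_i$ is a summand of $F_i\u\d$, so $\Ext^{k+i}_{RH}(F_i\u\d,M)\ne0$, hence by Eckmann--Shapiro $\Ext^{k+i}_{RG}(F_i\u,M\u)\ne0$, and $\wdG(F_i\u)\le n$ by Lemma~\ref{df}. This proves (1). For (2): if $\nctd_{RG}(N)=k$, then for $F$ an $RH$-module with $\wdH(F)\le n$, Lemma~\ref{df} gives $\wdG(F\u)\le n$, so $\Ext^{k+1}_{RH}(F,N\d)\cong\Ext^{k+1}_{RG}(F\u,N)=0$, whence $\nctd_{RH}(N\d)\le k$. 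When \sep, $N$ is a direct summand of $N\d\u$ by Lemma~\ref{sumandos directos}-(2), so $\nctd_{RG}(N)\le\nctd_{RG}(N\d\u)=\nctd_{RH}(N\d)$ by applying part (1) to the $RH$-module $N\d$, giving equality.

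**The main obstacle** I anticipate is purely bookkeeping: making sure that the relevant Eckmann--Shapiro isomorphism is invoked in the correct variance and that the class of modules of flat dimension $\le n$ is genuinely stable under the operations used (induction, restriction, restriction-then-induction, passage to direct summands). Lemma~\ref{df} handles induction and restriction, and direct summands are harmless since flat dimension of a summand does not exceed that of the whole; the one subtlety is that $(M\u)\d$ (resp.\ $N\d\u$) must be controlled, and for this one uses either Lemma~\ref{df} directly (it bounds $\wdH((M\u)\d)$ by $\wdG(M\u)=\wdH(M)$) or the structural fact that these restriction-induction composites are finite direct sums of twisted copies of the original module. Since the paper has already run this exact template three times (in Propositions~\ref{ndinjectives}, \ref{FPn-planos} and the $n$-cospiral proposition, and in Proposition~\ref{fpninyflat}), no genuinely new idea is required, and the proof can be compressed to: \emph{It suffices to prove (1), the inequality in (2) being immediate from Lemma~\ref{df} and Eckmann--Shapiro, and the equality following by applying (1) to $N\d$ together with Lemma~\ref{sumandos directos}. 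For (1), repeat the argument of Proposition~\ref{fpninyflat} replacing $\mathcal{FP}_n$ by the class of modules of flat dimension at most $n$, using Lemma~\ref{df} in place of Lemma~\ref{finitamentepresentados}.}
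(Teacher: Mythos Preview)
Your overall strategy and your final compressed statement are exactly what the paper does: it simply says ``apply the same arguments of the proof of Proposition~\ref{fpninyflat}'', with Lemma~\ref{df} playing the role of Lemma~\ref{finitamentepresentados}. Part~(2) of your detailed argument is correct, as is the lower bound $\nctd_{RG}(M\u)\ge k$ in part~(1).

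However, your detailed exposition of the inequality $\nctd_{RG}(M\u)\le\nctd_{RH}(M)=k$ is muddled. To prove this you must show $\Ext^{k+1}_{RG}(F,M\u)=0$ for \emph{every} $RG$-module $F$ with $\wdG(F)\le n$, but you only test against induced modules $F'\u$ with $F'$ an $RH$-module; without separability this does not exhaust the relevant class. The fix is the direct template from Proposition~\ref{fpninyflat}(1): take an $RG$-module $F$ with $\wdG(F)\le n$; Lemma~\ref{df} gives $\wdH(F\d)\le n$, and Lemma~\ref{E-S lemma}(2) gives $\Ext^{k+1}_{RG}(F,M\u)\cong\Ext^{k+1}_{RH}(F\d,M)=0$. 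This avoids the detour through $(M\u)\d$ altogether, and in particular avoids two further slips in that detour: the assertion ``$(M\u)\d$ has flat dimension $\le n$'' is irrelevant (and unjustified, since $M$ is arbitrary), and the claim that $(M\u)\d$ is a direct summand of $M^{[G:H]}$ is false for \emph{skew} group rings, where the Mackey-type decomposition yields twisted copies ${}^{g}M$ rather than copies of $M$.
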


%

The left \textit{global $n$-cotorsion dimension} of a ring $R$, which will be denote by $\gnctd(R)$, is defined by

$$\gnctd(R)= \sup \{\nctd_R(M)\mid M \text{ is a left $R$-module} \}.$$

\vspace*{0.5cm}

We can now state the analogue of Corollaries \ref{globaliny-flatcotdim} and \ref{globaliny-flatcotdimordeninvertible}.

\begin{coro}
Let $H$ be a subgroup of $G$. Then, $\gcd(RH)\leq \gcd(RG)$ and when \sep  \ \ the equality holds.
\end{coro}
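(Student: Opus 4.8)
The plan is to mimic the proof of Corollary~\ref{globaliny-flatcotdim}, feeding in the preceding proposition on the $n$-cotorsion dimension in place of Proposition~\ref{fpninyflat}. First I would recall that, by definition, $\gnctd(RH) = \sup\{\nctd_{RH}(M) \mid M \text{ a left } RH\text{-module}\}$, and similarly for $RG$. Part (1) of the preceding proposition gives $\nctd_{RH}(M) = \nctd_{RG}(M\u)$ for every $RH$-module $M$; since $M\u$ is in particular an $RG$-module, taking suprema over all $RH$-modules $M$ yields
\[
\gnctd(RH) \;=\; \sup_M \nctd_{RH}(M) \;=\; \sup_M \nctd_{RG}(M\u) \;\leq\; \gnctd(RG).
\]

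For the reverse inequality under the hypothesis that \sep, I would invoke part (2) of that proposition, which in the separable case gives $\nctd_{RG}(N) = \nctd_{RH}(N\d)$ for every $RG$-module $N$. Since $N\d$ is in particular an $RH$-module, taking suprema over all $RG$-modules $N$ gives
\[
\gnctd(RG) \;=\; \sup_N \nctd_{RG}(N) \;=\; \sup_N \nctd_{RH}(N\d) \;\leq\; \gnctd(RH),
\]
and combining the two inequalities produces the claimed equality.

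There is essentially no real obstacle here; once the module-level comparison (the preceding proposition) is available, the argument is purely formal. The only point requiring a little care is the direction of the inequalities when passing to suprema: the class $\{\,M\u \mid M \in RH\text{-}\Mod\,\}$ need not exhaust $RG\text{-}\Mod$, which is exactly why part (1) alone yields only $\gnctd(RH)\le\gnctd(RG)$, and why the separability hypothesis — entering through part (2), which applies to \emph{every} $RG$-module — is precisely what is needed to close the gap and obtain equality.
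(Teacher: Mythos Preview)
Your proposal is correct and follows exactly the approach the paper intends: this corollary is stated without proof as ``the analogue of Corollaries~\ref{globaliny-flatcotdim} and~\ref{globaliny-flatcotdimordeninvertible}'', and your argument reproduces that proof verbatim with the preceding proposition on $\nctd$ substituted for Proposition~\ref{fpninyflat}. (You also correctly read from context that the symbol $\gcd$ in the statement is a typographical slip for $\gnctd$, the $n$-cotorsion global dimension just defined.)
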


\begin{coro}
Let $R$ be a ring and let $G$ be a finite  group whose order is invertible in $R$. Then, $\gcd(R)= \gcd(RG)$.
\end{coro}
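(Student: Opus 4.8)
The plan is to deduce this from the preceding proposition (the $n$-cotorsion dimension comparison $\nctd_{RH}(M)=\nctd_{RG}(M\u)$ and $\nctd_{RG}(N)\geq\nctd_{RH}(N\d)$, with equality when $RG$ is a separable extension over $RH$) in exactly the same way Corollary~\ref{globaliny-flatcotdim} was deduced from Proposition~\ref{fpninyflat}. Note first the slight abuse of notation in the statement: the symbol $\gcd$ here should be read as $\gnctd$, the global $n$-cotorsion dimension just defined; I would either correct the typo or simply proceed with $\gnctd$. The key observation is that $\gnctd$ of a ring is a supremum over all its left modules of the $n$-cotorsion dimension, and the restriction/induction functors let us transfer modules back and forth between $RH$ and $RG$ while controlling this dimension.

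First I would prove $\gnctd(RH)\le\gnctd(RG)$. Take any $RH$-module $M$. By part (1) of the preceding proposition, $\nctd_{RH}(M)=\nctd_{RG}(M\u)\le\gnctd(RG)$. Since $M$ was arbitrary, taking the supremum over all left $RH$-modules gives $\gnctd(RH)\le\gnctd(RG)$.

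Next, under the hypothesis that \sep, I would prove the reverse inequality. Take any $RG$-module $N$. By part (2) of the preceding proposition, in the separable case $\nctd_{RG}(N)=\nctd_{RH}(N\d)\le\gnctd(RH)$. Taking the supremum over all left $RG$-modules yields $\gnctd(RG)\le\gnctd(RH)$, hence equality. The two inequalities together complete the proof. There is no real obstacle here: the entire content has already been isolated in the $n$-cotorsion dimension proposition, and this corollary is a purely formal passage to suprema — the only thing to be careful about is that the supremum in $\gnctd(RG)$ ranges over \emph{all} $RG$-modules, so that every module of the form $M\u$ for $M$ an $RH$-module is indeed included, which is automatic, and symmetrically for the restriction direction. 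I would also remark, as the paper does after Corollary~\ref{globaliny-flatcotdim}, that specializing to $H=1$ (whence $RH=R$ and $[G:1]=|G|$ invertible in $R$ makes $RG$ a separable extension over $R$) immediately yields the subsequent corollary $\gnctd(R)=\gnctd(RG)$.
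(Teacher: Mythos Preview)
Your proof is correct and follows the paper's approach. The paper gives no explicit proof for this corollary; it simply says ``We can now state the analogue of Corollaries~\ref{globaliny-flatcotdim} and~\ref{globaliny-flatcotdimordeninvertible}'', meaning the argument is the formal passage to suprema you carry out, followed by the specialization $H=\{1\}$ (using that $|G|$ invertible in $R$ makes $RG$ separable over $R$). You are also right that the paper's use of $\gcd$ here is a typo for $\gnctd$.

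One organizational remark: the statement you were asked to prove \emph{is} the $H=\{1\}$ specialization, so your last sentence has the order reversed --- the body of your argument establishes the \emph{preceding} corollary (general $H$), and the specialization to $H=\{1\}$ is the target statement itself, not a ``subsequent'' one. This is purely a matter of presentation; the mathematics is correct.
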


We conclude this section by showing some results concerning $n$-cotorsion modules and skew group rings.\\

By \cite[Corollary 6.5]{MD} $R$ is a semisimple Artinian ring if and only if  every $n$-cotorsion $R$-module is projective. Then, the following result is an immediate consequence of Proposition \ref{cotorsion}.

\begin{coro}\label{semisimpleartinian}
Let $H$ be a subgroup of $G$. If $RG$ is a semisimple Artinian ring, then so is $RH$.
Moreover,  if \sep; the skew group ring
$RG$ is a semisimple Artinian ring if and only if  so is $RH$.
\end{coro}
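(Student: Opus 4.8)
The plan is to combine the module-theoretic characterization of semisimple Artinian rings via $n$-cotorsion modules (namely \cite[Corollary 6.5]{MD}: $R$ is semisimple Artinian if and only if every $n$-cotorsion $R$-module is projective) with the transfer properties of the induction and restriction functors established in Proposition \ref{cotorsion} and Lemma \ref{dp}. First I would assume $RG$ is semisimple Artinian and show $RH$ is. Let $M$ be an $n$-cotorsion $RH$-module; by Proposition \ref{cotorsion}-(1), $M\u$ is an $n$-cotorsion $RG$-module, hence projective over $RG$ by \cite[Corollary 6.5]{MD}. Then $\pdH(M)=\pdG(M\u)=0$ by Lemma \ref{dp}, so $M$ is projective over $RH$. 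Since every $n$-cotorsion $RH$-module is projective, \cite[Corollary 6.5]{MD} gives that $RH$ is semisimple Artinian.

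For the converse direction, assume \sep\ and that $RH$ is semisimple Artinian. Let $N$ be an $n$-cotorsion $RG$-module. By Proposition \ref{cotorsion}-(2), $N\d$ is an $n$-cotorsion $RH$-module, hence projective over $RH$ by \cite[Corollary 6.5]{MD}. Now I would invoke the separability hypothesis: by Lemma \ref{sumandos directos}-(2), $N$ is isomorphic to a direct summand of $N\d\u$, and the latter is projective over $RG$ because the induction functor preserves projectives and $N\d$ is $RH$-projective. A direct summand of a projective module is projective, so $N$ is projective over $RG$. Applying \cite[Corollary 6.5]{MD} once more yields that $RG$ is semisimple Artinian. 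Alternatively one can argue purely via dimensions: $\pdG(N)=\pdH(N\d)=0$ using the equality case of Lemma \ref{dp} under the separability assumption.

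I do not expect any serious obstacle here; the statement is essentially a formal consequence of results already in hand, and the argument is parallel to the proofs of Propositions \ref{n-perfectos}, \ref{ndrings} and \ref{nCoherent}. The only point requiring a little care is the converse direction, where the separability hypothesis is genuinely needed — without it, restriction of an $n$-cotorsion $RG$-module is still $n$-cotorsion over $RH$ and hence projective, but we cannot recover projectivity of the original $RG$-module from that of its restriction, since $N$ need not be a direct summand of $N\d\u$ in general (this is exactly the content of Lemma \ref{sumandos directos}-(2), which fails without separability, as the $\beta\gamma=\gamma\beta=0$ example in the introduction illustrates).
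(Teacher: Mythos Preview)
Your proposal is correct and follows exactly the approach the paper intends: the paper simply declares the corollary to be ``an immediate consequence of Proposition \ref{cotorsion}'' after citing \cite[Corollary 6.5]{MD}, and your argument is precisely the detailed unpacking of that claim, using Proposition \ref{cotorsion} for the transfer of $n$-cotorsion and Lemma \ref{dp} (or equivalently Lemma \ref{sumandos directos}) for the transfer of projectivity.
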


According to \cite[Theorem 6.4]{MD} a ring $R$ has weak dimension $wD(R)$ less than or equal to $n$ if and only if every $n$-cotorsion module has its weak dimension bounded  by $n$.

\begin{coro} Let $R$ be a left coherent ring and let $G$ be a finite  group whose order is invertible in $R$. If every $n$-cotorsion $R$-module has weak dimension at most $n$, then $RG$ is a left regular coherent ring.
\end{coro}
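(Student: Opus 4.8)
The plan is to combine the characterization of weak dimension via $n$-cotorsion modules with the behaviour of weak dimension and $n$-cotorsion modules under the induction–restriction procedure. Since the order of $G$ is invertible in $R$, the skew group ring $RG$ is a separable extension over $R$, so all the ``if \sep'' halves of the earlier results apply with $H$ the trivial subgroup. First I would use Proposition \ref{Coherent}: since $R$ is left coherent, so is $RG$. Next, the hypothesis that every $n$-cotorsion $R$-module has weak dimension at most $n$ is, by \cite[Theorem 6.4]{MD}, equivalent to $wD(R)\leq n$; hence $\wdR$ of every $R$-module is at most $n$. By Lemma \ref{df} (applied with $H$ trivial and using that $RG$ is a separable extension over $R$), for every $RG$-module $N$ we have $\wd_{RG}(N)=\wd_R(N\d)\leq n$, so $wD(RG)\leq n$, i.e. $RG$ has finite weak dimension. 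A coherent ring of finite weak dimension is regular, so $RG$ is left regular coherent.

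More explicitly, the key steps in order are: (1) invoke the remark at the end of Section~1.3 that, when the order of $G$ is invertible in $R$, $RG$ is a separable extension over $R$; (2) apply Proposition \ref{Coherent} with $H=\{1\}$ to get that $RG$ is left coherent; (3) translate the hypothesis via \cite[Theorem 6.4]{MD} into $wD(R)\le n$; (4) apply Lemma \ref{df} with $H=\{1\}$: every $RG$-module $N$ restricts to an $R$-module with $\wd_R(N\d)\le n$, and by the separability the equality $\wd_{RG}(N)=\wd_R(N\d)$ holds, so $wD(RG)\le n$; (5) conclude with the standard fact, recalled just before Proposition \ref{coherenteregular}, that a coherent ring of finite weak dimension is regular, hence $RG$ is left regular coherent. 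Alternatively one could bypass step (5) by invoking Proposition \ref{coherenteregular} directly with $H$ trivial once one knows $R$ is regular coherent — and $R$ is regular coherent because it is coherent of weak dimension $\le n$ — which gives the conclusion immediately; I would probably present this shorter route.

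I do not expect a serious obstacle here: the statement is essentially a packaging of Lemma \ref{df}, Proposition \ref{Coherent} (or Proposition \ref{coherenteregular}), and \cite[Theorem 6.4]{MD}. The only point requiring a little care is making sure the separability hypothesis is genuinely available — it is, since ``the order of $G$ is invertible in $R$'' is exactly the condition quoted in Section~1.3 guaranteeing that $RG$ is a separable extension over $R$ — so that the equalities (rather than mere inequalities) in Lemma \ref{df} and Proposition \ref{coherenteregular} can be used. One should also note that $R$ left coherent already forces $RG$ left coherent via Proposition \ref{Coherent}, so coherence of $RG$ need not be assumed separately.
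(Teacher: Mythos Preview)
Your proposal is correct, and the ``shorter route'' you end up preferring is exactly the paper's proof: from the hypothesis and \cite[Theorem 6.4]{MD} one gets $wD(R)\le n$, hence $R$ is left regular coherent, and then Proposition~\ref{coherenteregular} (with $H$ trivial, using separability of $RG$ over $R$) transfers this to $RG$. The only cosmetic difference is that the paper spells out the step ``coherent of finite weak dimension $\Rightarrow$ regular'' via \cite[Corollary 2.5.6]{Glaz} and \cite{quentel}, whereas you invoke the remark preceding Proposition~\ref{coherenteregular}; your first route through Lemma~\ref{df} is an equally valid variant.
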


\begin{proof}. Since every $n$-cotorsion $R$-module has weak dimension at most $n$, the weak dimension $wD(R)$ of $R$ is bounded by $n$.  Then, the projective dimension of every  finitely presented $R$-module is uniformly bounded  by $n$  \cite[Corollary 2.5.6]{Glaz}, which implies that $R$ is a left regular coherent ring by \cite{quentel}. Finally, by Proposition \ref{coherenteregular}, $RG$ is a left regular coherent ring.
\end{proof}

\section{Copure flat  and copure injective dimensions}

Following \cite{EJ}  the copure flat dimension of an $R$-module $M$, denoted by $\cfd_R(M)$, is defined as the smallest integer $n\geq 0$ such that $\Tor^R_{n+i}(E,M)=0$ for any injective right $R$-module $E$ and any integer $i\geq 1$. If no such $n$ exists, then $\cfd_R(M)=\infty$.

The copure flat dimension is another dimension which is closely related with the Gorenstein flat dimension.
If $R$ is a right coherent ring and $M$ is a left $R$-module with finite Gorenstein flat dimension $\Gfd_R(M)$, Holm \cite{Holm} showed that $\Gfd_R(M)=\cfd_R(M)$. Over any ring, Bennis \cite{Bennis} proved that the inequality $\cfd_R(M)\leq\Gfd_R(M)$ always holds and if in addition the weak dimension $\wdR(M)$ of $M$ is finite one has the equalities $\wdR(M)=\Gfd_R(M)=\cfd_R(M)$.

In the same work of Enochs and Jenda, the copure injective dimension of an $R$-module $M$, denoted by $\cid_R(M)$, is defined as the smallest integer $n\geq 0$ such that $\Ext_R^{n+i}(E,M)=0$ for any injective $R$-module $E$ and any integer $i\geq 1$. If no such $n$ exists, then $\cid_R(M)=\infty$.\\

Over any ring $R$, for an $R$-module $M$, we have $\cfd(M)=\cid(M^+)$ by \cite[Lemma 3.4]{EJ} and if $R$ is a commutative Artinian ring, $\cid(M)=\cfd(M^+)$ by \cite[Lemma 3.6]{EJ}.

\begin{prop}
Let $H$ be a subgroup of $G$, $M$ be an $RH$-module and $N$ be an $RG$-module.
\begin{enumerate}
  \item $\cid_{RH}(N\d)\leq \cid_{RG}(N)$ and when  \sep  \ \ the equality holds.
  \item $\cid_{RG}(M\u)= \cid_{RH}(M)$.
  \item $\cfd_{RG}(M\u)= \cfd_{RH}(M)$.
  \item $\cfd_{RH}(N\d)\leq \cfd_{RG}(N)$ and when  \sep  \ \ the equality holds.
\end{enumerate}
\end{prop}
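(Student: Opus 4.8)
The plan is to prove items (2)--(3) directly via Eckmann-Shapiro-type arguments, and then reduce (1) and (4) to (2)--(3) together with the direct-summand Lemma~\ref{sumandos directos}, exactly mimicking the proof of Proposition~\ref{fpninyflat}. First I would set up the homological input: the induction and restriction functors are exact and preserve both injective modules (by \cite[Proposition 2.2 and Remark 2.4]{Xiang}) and flat modules; moreover they commute with taking character modules, as recorded in the excerpt. Combined with the Eckmann-Shapiro Lemma~\ref{E-S lemma} and Lemma~\ref{Tor}, these are the only tools needed.

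For item (2), the key computation is: if $E$ is an injective $RG$-module then $E\d$ is an injective $RH$-module, and conversely every injective $RH$-module is a direct summand of something of the form $E\d$ for $E$ injective over $RG$ (indeed $I\hookrightarrow I\Uparrow_H^G\!\d\,$, and $I\Uparrow_H^G\cong I\u$ is injective over $RG$ since induction preserves injectives); also $E\u$ is injective over $RG$ whenever $E$ is injective over $RH$. Then for an injective $RH$-module $E$ and any $i\ge 1$, $\Ext^{i}_{RH}(E, M) \cong \Ext^{i}_{RG}(E\u, M\u)$ by Eckmann-Shapiro (part (1)), and $E\u$ runs over a class of injective $RG$-modules rich enough to compute $\cid_{RG}(M\u)$ from the other side because any injective $RG$-module $E'$ satisfies $\Ext^{i}_{RG}(E', M\u)\cong \Ext^{i}_{RH}(E'\!\d, M)$ with $E'\!\d$ injective over $RH$. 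Running the standard "if $\cid=k$ then the $(k{+}1)$-st Ext vanishes on all injectives while some $k$-th Ext is nonzero" bookkeeping on both sides yields the equality $\cid_{RG}(M\u)=\cid_{RH}(M)$; this is the place to be careful, since one must check the realizing module witnessing nonvanishing of $\Ext^{k+i}$ survives under the functor, which it does because the relevant module is a direct summand of its image under $\u\d$ (Lemma~\ref{sumandos directos}(1)). Item (3) follows from (2) by passing to character modules: $\cfd_R(L)=\cid_R(L^+)$ over any ring by \cite[Lemma 3.4]{EJ}, and $(M\u)^+\cong(M^+)\u$, so $\cfd_{RG}(M\u)=\cid_{RG}((M\u)^+)=\cid_{RG}((M^+)\u)=\cid_{RH}(M^+)=\cfd_{RH}(M)$.

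For item (1), first show $\cid_{RH}(N\d)\le \cid_{RG}(N)$: if $\cid_{RG}(N)=k$, then for every injective $RH$-module $E$ and $i\ge 1$ we have $\Ext^{k+i}_{RH}(E, N\d)\cong \Ext^{k+i}_{RG}(E\u, N)=0$ since $E\u$ is injective over $RG$, hence $\cid_{RH}(N\d)\le k$. Now assume $RG$ is a separable extension over $RH$. By (2), $\cid_{RG}(N\d\u)=\cid_{RH}(N\d)$, and by Lemma~\ref{sumandos directos}(2) $N$ is a direct summand of $N\d\u$; since $\cid$ cannot increase on passing to direct summands (the relevant $\Ext$ is then a direct summand of a vanishing group, exactly as in Remark~\ref{sumandos directos ext}), $\cid_{RG}(N)\le \cid_{RG}(N\d\u)=\cid_{RH}(N\d)$, giving the equality. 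Item (4) is obtained from (1) in the same way (3) was obtained from (2), using character modules; alternatively it follows by the same direct-summand argument applied to the copure flat dimension together with (3). The main obstacle is purely organizational: verifying in item (2) that the family $\{E\u : E \text{ injective } RH\text{-module}\}$ is adequate to read off $\cid_{RG}$ and that nonvanishing of the $k$-th $\Ext$ transports across the functor; once this is handled, everything else is a formal transcription of the argument used for Proposition~\ref{fpninyflat}.
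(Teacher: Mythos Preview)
Your proposal is correct and uses the same toolkit as the paper: Eckmann--Shapiro (Lemma~\ref{E-S lemma}), preservation of injectives under induction/restriction, the direct-summand Lemma~\ref{sumandos directos}, and the character-module identity $\cfd(L)=\cid(L^+)$ together with $(M\u)^+\cong(M^+)\u$. Items (3) and (4) are handled exactly as in the paper.

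The only difference is organizational. You prove (2) first and then feed it into the separable equality of (1), which forces you to worry about whether the family $\{E\u : E \text{ injective over } RH\}$ is ``adequate'' to detect $\cid_{RG}(M\u)$ and whether a witness to nonvanishing survives. The paper instead proves the inequality in (1) first and then uses it to obtain the $\geq$ direction of (2) in one line: $\cid_{RG}(M\u)\geq \cid_{RH}(M\u\d)\geq \cid_{RH}(M)$, the last step because $M$ is a summand of $M\u\d$. This ordering sidesteps your adequacy concern entirely; the $\leq$ direction of (2) is then just $\Ext^{n+i}_{RG}(E,M\u)\cong \Ext^{n+i}_{RH}(E\d,M)=0$ for $E$ injective over $RG$. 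Your route works, but the paper's ordering is a bit cleaner.
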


\begin{proof}
(1) Assume that $\cid_{RG}(N)=n$, then for  any injective $RH$-module $E$ and any integer $i\geq 1$,  $E\u$ is an injective $RG$-module, hence
$$\Ext_{RH}^{n+i}(E , N\d)\cong \Ext_{RG}^{n+i}(E\u , N)=0.$$
Then, $\cid_{RH}(N\d)\leq n=\cid_{RG}(N)$.

(2) By (1) $\cid_{RG}(M\u)\geq \cid_{RH}(M\u\d)\geq \cid_{RH}(M)$. On the other side, assume that $\cid_{RH}(M)=n$, then for  any injective $RG$-module $E$ and any integer $i\geq 1$,  $$\Ext_{RG}^{n+i}(E , M\u)\cong \Ext_{RH}^{n+i}(E\d , M)=0.$$

 Then, $\cid_{RG}(M\u)\leq n$ and, in consequence, $\cid_{RG}(M\u)= \cid_{RH}(M)$.

(3) By (2),  $\cfd_{RG}(M\u)=\cid_{RG}((M\u)^+)=\cid_{RG}((M^+)\u)=\cid_{RH}(M^+)=\cfd_{RH}(M)$.

(4) Follows directly applying item (1) since:
$$\cfd_{RH}(N\d)=\cid_{RH}((N\d)^+)=\cid_{RH}((N^+)\d)\leq \cid_{RG}(N^+)=\cfd_{RG}(N).$$

Moreover, if \sep \ \ by (2),  $ \cid_{RG}(N) \leq \cid_{RG}(N \d\u) = \cid_{RH}(N\d)$   and, by (3)  $\cfd_{RG}(N) \leq \cfd_{RG}(N \d\u)= \cfd_{RH}(N\d)$. Finally , the equalities of (1) and (4) hold.

\end{proof}

A module $M$ is said to be copure flat if $\Tor^R_1(E,M)=0$ for all injective right $R$-module $E$. Following \cite{EJ}, we will say that $M$ is \textit{strongly copure flat} if $\Tor^R_i(E,M)=0$ for all injective right $R$-module $E$ and for all $i\geq 1$. If $M$ is strongly copure flat, then $\cfd_R(M)=0$. In the same way, a  module $M$ is said to be copure injective if $\Ext_R^1(E,M)=0$ for all injective right $R$-module $E$ and  we will say that $M$ is \textit{strongly copure injective} if $\Ext_R^i(E,M)=0$ for all injective right $R$-module $E$ and for all $i\geq 1$. If $M$ is strongly copure injective, then $\cid_R(M)=0$.

We are thus led to the following consequences of the previous proposition.

\begin{coro} Let $H$ be a subgroup of $G$, $M$ be an $RH$-module and $N$ be an $RG$-module.

\begin{enumerate}
  \item If $N$ is an strongly copure flat (or injective) $RG$-module, then $N\d$ is an strongly copure flat  (or injective, respectively) $RH$-module. If in addition \sep \ \ the converse holds.
  \item $M$ is an strongly copure flat (or injective) $RH$-module if and only if  $M\u$ is an strongly copure flat (or injective, respectively) $RG$-module.
\end{enumerate}

\end{coro}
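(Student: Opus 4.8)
The plan is to deduce this corollary directly from the preceding proposition, which already records the six (in)equalities between copure flat/injective dimensions under restriction and induction, together with the elementary fact that "strongly copure flat" (resp. "strongly copure injective") is exactly the statement that the relevant dimension equals $0$. So the proof should be short and purely formal: translate each bullet of the corollary into a statement about $\cfd$ or $\cid$ being zero, and then invoke the matching item of the proposition.

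First I would treat the strongly copure flat case of (1). If $N$ is a strongly copure flat $RG$-module, then $\cfd_{RG}(N)=0$, and item (4) of the proposition gives $\cfd_{RH}(N\d)\leq\cfd_{RG}(N)=0$, so $N\d$ is strongly copure flat as an $RH$-module. Conversely, when $RG$ is a separable extension over $RH$, item (4) asserts the equality $\cfd_{RH}(N\d)=\cfd_{RG}(N)$, so $\cfd_{RH}(N\d)=0$ forces $\cfd_{RG}(N)=0$, i.e.\ $N$ is strongly copure flat. The strongly copure injective half of (1) is identical, using item (1) of the proposition (which states $\cid_{RH}(N\d)\leq\cid_{RG}(N)$ in general, with equality under the separability hypothesis) in place of item (4).

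For (2) I would use items (3) and (2) of the proposition, which give the unconditional equalities $\cfd_{RG}(M\u)=\cfd_{RH}(M)$ and $\cid_{RG}(M\u)=\cid_{RH}(M)$. Hence $\cfd_{RH}(M)=0$ if and only if $\cfd_{RG}(M\u)=0$, and likewise for $\cid$; translating back, $M$ is strongly copure flat (resp.\ injective) over $RH$ exactly when $M\u$ is strongly copure flat (resp.\ injective) over $RG$, and no separability assumption is needed here because the equalities in items (2) and (3) are unconditional.

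There is essentially no obstacle: the content was all absorbed into the previous proposition, and what remains is a dictionary between the phrase "strongly copure flat/injective" and the vanishing of $\cfd/\cid$. The only mild care needed is to keep the two parenthetical variants (flat versus injective) aligned with the correct items of the proposition — flat with items (3)--(4), injective with items (1)--(2) — and to note explicitly that the converse in (1) is precisely where the separable-extension hypothesis is consumed, while (2) is hypothesis-free. Accordingly I would write: "This is immediate from the previous proposition: $N$ (resp.\ $M$) is strongly copure flat if and only if $\cfd(-)=0$ and strongly copure injective if and only if $\cid(-)=0$, so (1) follows from items (1) and (4), and (2) from items (2) and (3)."
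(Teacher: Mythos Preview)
Your proposal is correct and matches the paper's approach: the paper offers no explicit proof, simply stating that the corollary is a consequence of the previous proposition, and your argument makes this explicit by observing that ``strongly copure flat/injective'' is equivalent to $\cfd=0$ (resp.\ $\cid=0$) and then reading off items (1)--(4).
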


In \cite{FD} the copure flat dimension of a ring $R$, denoted by $\cfD(R)$, was defined by the supremum of copure flat dimensions of $R$-modules. In the same work it was proved that $\cfD(R)=\wGgldim(R)$; where $\wGgldim(R)$ denotes the weak Gorenstein global dimension of the ring $R$.

\begin{coro}\label{copure flat dim ring}
Let $H$ be a subgroup of $G$. Then, $\cfD(RH)\leq \cfD(RG)$ and when \sep  \ \ the equality holds.
\end{coro}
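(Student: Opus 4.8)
The plan is to deduce Corollary~\ref{copure flat dim ring} from the previous proposition in exactly the way Corollary~\ref{globaliny-flatcotdim} was deduced from Proposition~\ref{fpninyflat}. Recall that $\cfD(R)=\sup\{\cfd_R(M)\mid M\text{ is a left }R\text{-module}\}$. The proposition immediately above gives, for every $RH$-module $M$ and every $RG$-module $N$, the relations $\cfd_{RG}(M\u)=\cfd_{RH}(M)$ and $\cfd_{RH}(N\d)\leq\cfd_{RG}(N)$, with equality in the latter when $RG$ is a separable extension over $RH$.

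First I would prove the inequality $\cfD(RH)\leq\cfD(RG)$. Given any $RH$-module $M$, item (3) of the proposition yields $\cfd_{RH}(M)=\cfd_{RG}(M\u)\leq\cfD(RG)$. Taking the supremum over all $RH$-modules $M$ gives $\cfD(RH)\leq\cfD(RG)$.

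Next I would prove the reverse inequality under the hypothesis that $RG$ is a separable extension over $RH$. Let $N$ be any $RG$-module. By item (4) of the proposition, under the separability hypothesis we have $\cfd_{RG}(N)=\cfd_{RH}(N\d)\leq\cfD(RH)$. Taking the supremum over all $RG$-modules $N$ gives $\cfD(RG)\leq\cfD(RH)$, and combined with the first part this yields the equality $\cfD(RH)=\cfD(RG)$.

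I do not anticipate any real obstacle here: the statement is a formal consequence of the module-level comparison already established in the preceding proposition, and the argument is identical in structure to the proofs of Corollaries~\ref{globaliny-flatcotdim} and~\ref{globaliny-flatcotdimordeninvertible}. The only point requiring a sliver of care is the direction of the inequalities in items (3) and (4) and the fact that the separability hypothesis is needed only for the reverse inclusion; once these are tracked correctly, passing to suprema is immediate. One could also note, via the identity $\cfD(R)=\wGgldim(R)$ recalled just before the corollary, that this simultaneously compares the weak Gorenstein global dimensions of $RG$ and $RH$, but this observation is not needed for the proof itself.
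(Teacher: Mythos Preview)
Your proof is correct and is exactly the argument the paper intends: the corollary is stated without proof immediately after the proposition, and the passage to suprema via items (3) and (4) is precisely the parallel of the proof of Corollary~\ref{globaliny-flatcotdim}.
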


By \cite[Theorem 4.1]{EJ} a left and right Noetherian ring $R$ is $n$-Gorenstein  if and only if $\cfd(M)\leq n$ for all $R$-module (left and right) $M$.
Combining this characterization with the previous corollary, we obtain:

\begin{coro}
Let $H$ be a subgroup of $G$. If $RG$ is an $n$-Gorenstein ring then $RH$ is an $n$-Gorenstein ring. Moreover, if  \sep  \ \ the skew group ring $RG$ is an $n$-Gorenstein ring if and only if $RH$ is an $n$-Gorenstein ring.
\end{coro}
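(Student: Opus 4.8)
The plan is to reduce everything to the characterization \cite[Theorem 4.1]{EJ}, according to which a ring is $n$-Gorenstein exactly when it is left and right Noetherian and every one of its left and right modules has copure flat dimension at most $n$. Thus it suffices to transport two pieces of data between $RG$ and $RH$: the left-and-right Noetherian condition, and the bound $\cfd(-)\le n$ over left and over right modules.

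For the Noetherian part I would use that $0$-coherent rings are precisely the Noetherian rings, so that Proposition \ref{nCoherent} specialized to $n=0$ shows that $RG$ left Noetherian implies $RH$ left Noetherian, and that the converse holds once \sep; the same induction--restriction argument applied to right modules gives the analogous statements for right Noetherianness. Consequently, if $RG$ is $n$-Gorenstein then $RH$ is left and right Noetherian, and under the separability hypothesis the implication also runs the other way.

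For the copure flat dimension, note first that if $RG$ is $n$-Gorenstein then $\cfd_{RG}(M)\le n$ for every left and every right $RG$-module, so both $\cfD(RG)$ and its right-module analogue are at most $n$; Corollary \ref{copure flat dim ring} (and its version for right modules) then yields $\cfD(RH)\le\cfD(RG)\le n$ on both sides, hence $\cfd_{RH}(M)\le n$ for all left and right $RH$-modules, and together with the Noetherian conclusion and \cite[Theorem 4.1]{EJ} this makes $RH$ an $n$-Gorenstein ring. Under the separability hypothesis I would run the argument backwards: $RH$ being $n$-Gorenstein gives $\cfD(RH)\le n$ on both sides, the equality clause of Corollary \ref{copure flat dim ring} upgrades this to $\cfD(RG)\le n$ on both sides, and combined with $RG$ being left and right Noetherian, \cite[Theorem 4.1]{EJ} shows $RG$ is $n$-Gorenstein. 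The computations are routine; the only care required is bookkeeping the left--right symmetry, namely invoking the right-module analogues of Proposition \ref{nCoherent} and Corollary \ref{copure flat dim ring} (legitimate since $RG$ is finitely generated and free as a module over $RH$ on both sides, so induction and restriction behave symmetrically) and the identification of $0$-coherence with the Noetherian property so that Proposition \ref{nCoherent} applies at $n=0$. I expect no substantive obstacle beyond this.
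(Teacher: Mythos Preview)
Your argument is correct and follows essentially the same route as the paper: invoke the characterization \cite[Theorem 4.1]{EJ} of $n$-Gorenstein rings via the bound $\cfd(M)\le n$ and then apply Corollary~\ref{copure flat dim ring} (and its right-module analogue). The only difference is that you make explicit the transfer of the left-and-right Noetherian hypothesis via Proposition~\ref{nCoherent} at $n=0$, a point the paper leaves tacit.
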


Recall that $R$ is called  a \textit{right IF-ring} if every right injective $R$-module is flat.  By  \cite[Proposition 2.10]{FD} a left coherent ring $R$ is a right IF-ring if and only if every left $R$-module is strongly copure flat.   Thus, another consequence of Corollary  \ref{copure flat dim ring} is the following.

\begin{coro}
Let $H$ be a subgroup of $G$ acting on a left coherent ring $R$. If $RG$ is a right IF-ring then $RH$ is a right IF-ring. Moreover, if  \sep , the skew group ring $RG$ is a right IF-ring if and only if $RH$ is a right IF-ring.
\end{coro}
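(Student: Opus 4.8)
The plan is to reduce the IF-ring statement to the copure flat machinery already in place, specifically Corollary~\ref{copure flat dim ring} together with the cited characterization \cite[Proposition 2.10]{FD}: over a left coherent ring, being a right IF-ring is equivalent to every left module being strongly copure flat, i.e. to $\cfD$ vanishing. So the first step is to observe that, by Proposition~\ref{Coherent}, $RG$ is left coherent if and only if $RH$ is; hence both skew group rings are left coherent as soon as one of them is, and the hypothesis that $R$ is left coherent guarantees this (since $R = RH$ when $H$ is trivial, and more generally one applies Proposition~\ref{Coherent} twice, from $R$ up to $RH$ and from $R$ up to $RG$, or directly to the pair $RH \le RG$). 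With left coherence secured on both sides, the characterization from \cite{FD} applies to both $RG$ and $RH$.

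Next I would translate "right IF-ring" into "$\cfD = 0$". If $RG$ is a right IF-ring, then every left $RG$-module is strongly copure flat, so $\cfD(RG)=0$; by Corollary~\ref{copure flat dim ring}, $\cfD(RH)\le\cfD(RG)=0$, whence $\cfD(RH)=0$, and again by \cite[Proposition 2.10]{FD} (using that $RH$ is left coherent) every left $RH$-module is strongly copure flat, i.e. $RH$ is a right IF-ring. For the converse under the separability hypothesis, Corollary~\ref{copure flat dim ring} gives the equality $\cfD(RH)=\cfD(RG)$ when $RG$ is a separable extension over $RH$; so if $RH$ is a right IF-ring then $\cfD(RG)=\cfD(RH)=0$, and the same characterization yields that $RG$ is a right IF-ring.

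I do not expect any genuine obstacle here: the work has already been done in Corollary~\ref{copure flat dim ring} (which itself rests on the behaviour of $\cfd$ under induction and restriction, and on $\wGgldim = \cfD$), and the only new ingredient is the dictionary "right IF-ring $\Leftrightarrow$ $\cfD=0$" valid under left coherence. The one point that requires a moment's care is making sure left coherence holds simultaneously for $RG$ and $RH$ before invoking \cite[Proposition 2.10]{FD} on each side; this is exactly what Proposition~\ref{Coherent} supplies, so it is routine. Thus the proof is essentially a two-line assembly: coherence transfers both ways by Proposition~\ref{Coherent}, $\cfD$ transfers (as an inequality always, as an equality under separability) by Corollary~\ref{copure flat dim ring}, and the IF-ring property is read off from $\cfD=0$ via \cite{FD}.
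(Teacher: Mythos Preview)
Your proposal is correct and follows exactly the approach the paper intends: the corollary is stated in the paper without a written proof, merely as ``another consequence of Corollary~\ref{copure flat dim ring}'', and your assembly---transfer coherence via Proposition~\ref{Coherent}, read ``right IF-ring'' as $\cfD=0$ via \cite[Proposition~2.10]{FD}, then apply Corollary~\ref{copure flat dim ring}---is precisely the intended argument. The only thing you make explicit that the paper leaves implicit is the coherence check on both $RG$ and $RH$, which is indeed routine from Proposition~\ref{Coherent} and the hypothesis that $R$ is left coherent.
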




\section{Some applications }

\subsection{Cotorsion-flat and Gorenstein-flat modules}

Given a ring $R$  the classic non-trivial example of a cotorsion pair of classes of $R$-modules  is the flat-cotorsion pair, $(\mathcal{F}(R), \mathcal{C}(R))$, where $\mathcal{F}(R)$ denotes the class of flat $R$-modules and  $\mathcal{C}(R)$ denotes the class of  cotorsion $R$-modules.

Recent contributions in the study of Gorenstein flat modules have highlighted the importance of the modules that are simultaneously cotorsion and Gorestein flat. The class of Gorenstein flat modules will be denoted $\mathcal{GF}(R)$.
As originally shown in \cite{G1} for right coherent rings, and expanded upon in \cite{CET} to all rings, the class $(\mathcal{GF} \cap \mathcal{C})(R)$, consisting of all modules that are simultaneously Gorenstein flat and cotorsion, is a Frobenius exact category whose
projective-injective objects are $(\mathcal{F} \cap \mathcal{C})(R)$.

Since the functor $RG\otimes_R -$ preserves  modules that are simultaneously Gorestein flat and cotorsion  by \cite[Propositions 2.3 and 2.5]{Xiang};
after a direct application of   \cite[Theorem 3.3]{Bird} and Proposition \ref{Coherent} we get the following result.

 \begin{coro}
 Let $R$ be a right coherent ring and $G$ a finite group. Then $RG\otimes_R -$ preserves totally acyclic complex of flat-cotorsion modules. In particular it yields a functor between the Frobenius exact categories $(\mathcal{GF} \cap \mathcal{C})(R)\rightarrow (\mathcal{GF} \cap \mathcal{C})(RG)$, which induces a triangulated functor between the corresponding stable categories.
\end{coro}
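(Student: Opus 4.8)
The plan is to check that the induction functor $F:=RG\otimes_R-$ meets the hypotheses of \cite[Theorem 3.3]{Bird} and then to transport the resulting functor down to the stable categories. First I would observe that $RG$ is a right coherent ring: this is Proposition~\ref{Coherent} with $H$ the trivial subgroup. Hence, by \cite{G1} and \cite{CET}, both $(\mathcal{GF}\cap\mathcal{C})(R)$ and $(\mathcal{GF}\cap\mathcal{C})(RG)$ are Frobenius exact categories whose projective-injective objects are $(\mathcal{F}\cap\mathcal{C})(R)$ and $(\mathcal{F}\cap\mathcal{C})(RG)$ respectively. Next I would record the properties of $F$ to be used: it is exact because $RG$ is free, hence flat, as a right $R$-module; it preserves flat modules by \cite[Proposition 2.2]{Xiang}, cotorsion modules by \cite[Proposition 2.3]{Xiang}, and Gorenstein flat modules by \cite[Proposition 2.5]{Xiang}. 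Consequently $F$ maps $(\mathcal{F}\cap\mathcal{C})(R)$ into $(\mathcal{F}\cap\mathcal{C})(RG)$ and $(\mathcal{GF}\cap\mathcal{C})(R)$ into $(\mathcal{GF}\cap\mathcal{C})(RG)$.

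For the key assertion, let $T$ be a totally acyclic complex of flat-cotorsion $R$-modules; so $T$ is acyclic, each $T^i$ lies in $(\mathcal{F}\cap\mathcal{C})(R)$, and, since $R$ is right coherent, each cocycle $Z^i(T)$ lies in $\mathcal{GF}(R)\cap\mathcal{C}(R)$. Applying $F$ and using exactness, $FT$ is acyclic with $Z^i(FT)\cong F(Z^i(T))$; each term $F(T^i)$ is flat-cotorsion over $RG$, and each cocycle $F(Z^i(T))$ is Gorenstein flat and cotorsion over $RG$. Because $RG$ is right coherent, a complex of flat-cotorsion $RG$-modules whose cocycles are all Gorenstein flat is totally acyclic; thus $FT$ is a totally acyclic complex of flat-cotorsion $RG$-modules. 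Phrasing this as the verification of the criterion of \cite[Theorem 3.3]{Bird} for $F$ then gives the first statement, and in particular $F$ restricts to a functor $(\mathcal{GF}\cap\mathcal{C})(R)\to(\mathcal{GF}\cap\mathcal{C})(RG)$.

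To finish, I would note that exactness of $F$ makes it an exact functor of the ambient exact categories which, by the above, preserves projective-injective objects; an exact functor between Frobenius exact categories preserving projective-injectives descends to an additive functor of the stable categories. Using exactness once more, this induced functor commutes up to natural isomorphism with the suspension (the cosyzygy relative to a conflation into a projective-injective object) and carries standard triangles to standard triangles, so it is triangulated.

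The step I expect to be the main obstacle is upgrading ``$FT$ is acyclic with Gorenstein flat cocycles'' to ``$FT$ is totally acyclic'': total acyclicity over $RG$ is tested against every flat-cotorsion $RG$-module, not merely those of the form $F(C)$ with $C$ a flat-cotorsion $R$-module, so a naive adjunction argument is not enough and one must invoke the intrinsic characterization of totally acyclic flat-cotorsion complexes in terms of their cocycles being Gorenstein flat. This is exactly where right coherence of $RG$ (Proposition~\ref{Coherent}) together with Xiang's preservation of Gorenstein flat modules is needed, and it is what \cite[Theorem 3.3]{Bird} packages.
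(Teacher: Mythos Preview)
Your proposal is correct and follows essentially the same approach as the paper: the paper's proof consists solely of noting that $RG\otimes_R-$ preserves Gorenstein flat and cotorsion modules by \cite[Propositions 2.3 and 2.5]{Xiang}, and then invoking \cite[Theorem 3.3]{Bird} together with Proposition~\ref{Coherent}. You have simply unpacked these ingredients in more detail, including the auxiliary preservation of flat modules and the descent to stable categories.
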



Recall that a model structure on a category is a formal categorical way of introducing an homotopy theory on that category.
An easy consequence of Corollary \ref{Ding-Chen ring} and \cite[Theorem 4.10]{Gillespie} is the next result.

\begin{coro}
Let  $R$  be a Ding-Chen ring and $G$ a finite group. Then,  there is a model structure on $RG$-Mod in which the cofibrant objects are the Gorestein flat $RG$-modules, the fibrant objects are the cotorsion $RG$-modules, and the trivial objects are the modules of finite $(1,0)$-injective dimension.
\end{coro}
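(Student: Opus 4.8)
The plan is to reduce the whole statement to the single ring $RG$ and then invoke Gillespie's construction. First I would apply Corollary~\ref{Ding-Chen ring}: since $R$ is a Ding-Chen ring and $G$ is finite, $RG$ is again a Ding-Chen ring. In particular $RG$ is both left and right coherent and has finite $(1,0)$-injective dimension as a left and as a right module over itself, so $RG$ satisfies the hypotheses of \cite[Theorem 4.10]{Gillespie}.

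The second step is simply to feed the ring $S := RG$ into \cite[Theorem 4.10]{Gillespie}. That theorem equips every Ding-Chen ring $S$ with a cofibrantly generated abelian model structure on $S\text{-}\Mod$ whose cofibrant objects are the Gorenstein flat $S$-modules, whose fibrant objects are the cotorsion $S$-modules, and whose class of trivial objects is exactly the class of $S$-modules of finite FP-injective dimension. In the notation of Section~4, FP-injectivity is the same as $(1,0)$-injectivity, so for $S = RG$ the trivial objects are precisely the $RG$-modules of finite $(1,0)$-injective dimension. This yields the asserted model structure on $RG\text{-}\Mod$.

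The only point that deserves attention is matching Gillespie's vocabulary with the one fixed in Sections~3 and~4: Gorenstein flat modules, cotorsion modules, and modules of finite $(1,0)$-injective dimension all have the same meaning in both frameworks, since over the coherent ring $RG$ the notions of FP-injective, absolutely pure, and $(1,0)$-injective module coincide (each being characterized by the vanishing of $\Ext^1_{RG}(F,-)$ for finitely presented $F$), and the cotorsion class appearing as the fibrant objects of Gillespie's Hovey triple is the usual one. There is therefore no genuine obstacle here: the content of the corollary is carried entirely by Corollary~\ref{Ding-Chen ring} together with \cite[Theorem 4.10]{Gillespie}, and the remaining work is purely a matter of unwinding definitions.
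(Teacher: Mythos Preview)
Your proposal is correct and matches the paper's own argument exactly: the paper presents this corollary as ``an easy consequence of Corollary~\ref{Ding-Chen ring} and \cite[Theorem 4.10]{Gillespie},'' which is precisely the two-step reduction you describe. Your additional paragraph matching Gillespie's FP-injective terminology with the $(1,0)$-injective terminology of Section~4 is a helpful elaboration the paper leaves implicit.
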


\subsection{Finite group-graded rings and the smash product ring}

Let $G$ be a finite group. We denote by $|G|$ its order.  An associative ring with identity is said to be $G$-graded if $R=\bigoplus_{g\in G}R_g$  and $R_gR_h\subseteq R_{gh}$. A left $R$-module $M$ is called $G$-graded if $M=\bigoplus_{g\in G}M_g$  and $M_gM_h\subseteq M_{gh}$. The duality between finite group actions and group gradings on rings  was shown by Cohen and Montgomery in \cite{CM}. This duality was used to extend known results on skew group rings to corresponding results for large classes of group-graded rings. The smash product $R\#G$  is a free right and left $R$-module with a basis  $\lbrace{p_g: g\in G }\rbrace$  and the multiplication determined by $(rp_g)(r'p_h)= rr'_{gh^{-1}}p_h$  where $g,h \in G$, $r, r'\in R$ and $r'_{gh^{-1}}$ is the $gh^{-1}$-component of $r'$.\\

The following result is a fairly straightforward of Corollaries \ref{ndrings}, \ref{nCoherent}, \ref{Ding-Chen ring}, \ref{semisimpleartinian} and Propositions \ref{n-perfectos},  \ref{Coherent}, \ref{coherenteregular}.

\begin{prop}\label{smash}

Let $R$ be a ring graded by a finite group $G$. Then:

\begin{enumerate}
\item  $R$ is a left coherent ring if and only if $R\#G$ is a left coherent ring.
\item  $R$ is a Ding-Chen ring if and only if $R\#G$ is a  Ding-Chen ring.
\item  If $R$ is a left $n$-perfect ring, then $R\#G$ is a left $n$-perfect ring.
\item  If $R$ is a left $(n,d)$-ring, then $R\#G$ is a left $(n,d)$-ring.
\item  If $R$ is a left $n$-coherent ring, then $R\#G$ is a left $n$-coherent ring.
\item  If $R$ is a semisimple Artinian ring, then $R\#G$ is a semisimple Artinian ring.
\item  If $R$ is a left regular coherent ring, then $R\#G$ is a left regular coherent ring.
\end{enumerate}
\end{prop}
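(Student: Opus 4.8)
The plan is to reduce everything to the skew group ring results already established, using the Cohen--Montgomery duality theorem \cite{CM}. Recall that when $R$ is graded by the finite group $G$, the smash product $R\#G$ carries a natural (dual) action of $G$ by ring automorphisms, and the associated skew group ring $(R\#G)G$ is isomorphic to the full matrix ring $\mathcal{M}_{|G|}(R)$. Thus $R$ is Morita equivalent to $(R\#G)G$, while $R\#G$ is precisely the subring $(R\#G)H$ attached to the trivial subgroup $H=\{1\}\leq G$. So the diagram to keep in mind is $R \sim_{\mathrm{Morita}} (R\#G)G \supseteq (R\#G)H = R\#G$, and all of Sections 2--7 applies verbatim to the skew group ring $(R\#G)G$ and this trivial subgroup.

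The second ingredient is that each of the seven properties in the statement is Morita invariant. Being left coherent, semisimple Artinian, left $n$-perfect, a left $(n,d)$-ring, left $n$-coherent, or left regular coherent is defined through classes of modules (flat modules, finitely $n$-presented modules, finitely generated ideals) and homological invariants (projective dimension) that any Morita equivalence preserves; and a ring is Ding--Chen exactly when it is left and right coherent with finite FP-injective dimension of the regular module, again a Morita-invariant condition. Hence $R$ has any one of these properties if and only if $\mathcal{M}_{|G|}(R)\cong (R\#G)G$ does.

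With these two observations in place the proposition follows by invoking the corresponding earlier results for the skew group ring $(R\#G)G$ with $H=\{1\}$. For (1) and (2) I would use Proposition~\ref{Coherent} and Corollary~\ref{Ding-Chen ring}, which give, with no extra hypothesis, that $(R\#G)G$ is left coherent (resp. Ding--Chen) if and only if $R\#G$ is; composing with the Morita step gives the two equivalences. For (3)--(7) I would use the ``only if'' halves of Propositions~\ref{n-perfectos}, \ref{ndrings}, \ref{nCoherent}, Corollary~\ref{semisimpleartinian} and Proposition~\ref{coherenteregular}: whenever $(R\#G)G$ is left $n$-perfect (resp. a left $(n,d)$-ring, left $n$-coherent, semisimple Artinian, left regular coherent), so is its subring $(R\#G)H=R\#G$; prepending the Morita step, which transports the property from $R$ to $(R\#G)G$, yields the stated implications.

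The only genuinely delicate point is the first paragraph: one must quote the Cohen--Montgomery isomorphism $(R\#G)G\cong\mathcal{M}_{|G|}(R)$ in the correct form, in particular verifying that the dual $G$-action appearing there is by ring automorphisms so that the induction/restriction machinery is legitimately applicable, and noting that finiteness of $G$ is all that is required here (none of Proposition~\ref{Coherent}, Corollary~\ref{Ding-Chen ring}, nor the ``only if'' directions above uses separability, so no invertibility of $|G|$ in $R$ need be assumed). Everything else is bookkeeping, and it is exactly this bookkeeping that justifies the list of cross-references attached to the statement.
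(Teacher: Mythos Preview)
Your proposal is correct and follows essentially the same strategy as the paper: identify $(R\#G)G$ with $\mathcal{M}_{|G|}(R)$ via Cohen--Montgomery/Quinn duality, transfer each property between $R$ and $(R\#G)G$, and then apply the skew group ring results with $H=\{1\}$ to descend to $R\#G$.

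The one substantive difference is in the middle step. The paper transfers the properties from $R$ to $\mathcal{M}_{|G|}(R)$ by invoking that $\mathcal{M}_{|G|}(R)$ is a \emph{separable extension} of $R$, implicitly assuming its earlier propositions (stated only for the pair $RH\subseteq RG$) extend verbatim to this extension; this is true, since the proofs only use exactness of induction/restriction, preservation of projectives, flats and finite generation, and the direct-summand property, but it is not spelled out. You instead use that $\mathcal{M}_{|G|}(R)$ is \emph{Morita equivalent} to $R$, which is a cleaner and more standard justification and avoids that implicit generalisation. Either route works; yours is arguably tidier.
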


\begin{proof}
We begin by noting that $G$ acts on the smash product $R\#G$ and  the skew  group ring $(R\#G)G$ is isomorphic to the matrix ring $\mathcal{M}_{|G|}(R)$ by \cite[Theorem 4.10]{Quinn}. Since  $\mathcal{M}_{|G|}(R)$ is a separable extension over $R$, the ring  $(R\#G)G$ gets the properties listed above from $R$;  and then  the smash product $R\#G$ inherits those properties from $(R\#G)G$. For the converse of (1) and (2) observe that if $R\#G$ is a left coherent (or Ding-Cheng) ring, so is the skew  group ring $(R\#G)G$. Hence so is $R$.
\end{proof}

\begin{obs}
Note that in the case when the order of $G$ is invertible in $R\#G$, the converse of properties $(3)$ to $(7)$ holds.
\end{obs}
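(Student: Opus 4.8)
The plan is to run the chain of transfers from the proof of Proposition~\ref{smash} \emph{in reverse}, the extra hypothesis being precisely what turns the one step that was previously free into a separable, hence reversible, one. Write $S=R\#G$; recall that $G$ acts on $S$ and that $(R\#G)G=SG\cong\mathcal{M}_{|G|}(R)$ by \cite[Theorem 4.10]{Quinn}. The forward direction moved each property along $R\to\mathcal{M}_{|G|}(R)\cong SG\to S$, using that $\mathcal{M}_{|G|}(R)$ is separable over $R$ for the first passage and restriction for the second; I would instead move properties along $S\to SG\cong\mathcal{M}_{|G|}(R)\to R$.

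First I would record the new input supplied by the hypothesis. Since $|G|$ is invertible in $S=R\#G$, the skew group ring $SG$ is a separable extension over $S$ (the standard fact recalled in the Separable extensions subsection, applied with base ring $R\#G$). Consequently, for each of the five properties $P$ in question, the ``separable'' clauses of Propositions~\ref{n-perfectos}, \ref{ndrings}, \ref{nCoherent}, \ref{coherenteregular} and of Corollary~\ref{semisimpleartinian}---read for the skew group ring $SG$ over its base ring $S$ (that is, taking the trivial subgroup)---yield the equivalence ``$SG$ has $P$ if and only if $S$ has $P$''. In particular, assuming that $R\#G=S$ has $P$, I conclude that $SG\cong\mathcal{M}_{|G|}(R)$ has $P$. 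This is the only place the invertibility hypothesis is used: it is exactly what makes the ascent $S\to SG$ legitimate.

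It then remains to descend from $\mathcal{M}_{|G|}(R)$ to $R$, i.e. to show that $\mathcal{M}_{|G|}(R)$ having $P$ forces $R$ to have $P$, for each $P\in\{n\text{-perfect},\,(n,d),\,n\text{-coherent},\,\text{semisimple Artinian},\,\text{regular coherent}\}$. This is the converse of the matrix-ring transfer used in the forward proof and requires no separability (separability was only needed to ascend): I would justify it by observing that $\mathcal{M}_{|G|}(R)$ is Morita equivalent to $R$ and that each of these five properties is Morita invariant. Equivalently, one may invoke the unconditional (big-to-small) restriction direction of the induction--restriction formalism for the extension $\mathcal{M}_{|G|}(R)/R$, using that $\mathcal{M}_{|G|}(R)$ is free of finite rank as a left and right $R$-module and that $R$ embeds as an $R$-bimodule direct summand of $\mathcal{M}_{|G|}(R)$ via the scalar inclusion and the $(1,1)$-entry projection. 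Either mechanism gives $\mathcal{M}_{|G|}(R)$ has $P\Rightarrow R$ has $P$, which closes the converse of each of (3)--(7).

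The main obstacle I anticipate is carrying this last descent out \emph{uniformly and correctly for all five properties}: $n$-perfectness, the $(n,d)$ condition, and $n$-coherence are transparently module-theoretic and descend cleanly, but ``semisimple Artinian'' and ``regular coherent'' each bundle a coherence/finiteness condition with a homological one, so I would confirm that the chosen mechanism (Morita invariance, or the restriction functor together with Chase's theorem and Quentel's criterion already cited in the paper) is known to preserve these combined properties before declaring the chain complete. Combined with Proposition~\ref{smash}, the net effect is a full equivalence ``$R$ has $P$ $\iff$ $R\#G$ has $P$'' for $(3)$--$(7)$ whenever $|G|$ is invertible in $R\#G$.
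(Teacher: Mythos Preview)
Your argument is correct and is precisely the intended one: the paper provides no proof for this remark, and the natural justification is exactly the reversal you describe---use the invertibility of $|G|$ in $R\#G$ to make $(R\#G)G$ separable over $R\#G$, ascend each property along $R\#G\to (R\#G)G\cong\mathcal{M}_{|G|}(R)$ via the ``moreover'' clauses of Propositions~\ref{n-perfectos}, \ref{ndrings}, \ref{nCoherent}, \ref{coherenteregular} and Corollary~\ref{semisimpleartinian}, and then descend to $R$. For the final descent you could equally well stay within the paper's framework rather than invoking Morita invariance: since $\mathcal{M}_{|G|}(R)$ is separable over $R$, the same results (applied as in the forward proof of Proposition~\ref{smash}) give the biconditional $\mathcal{M}_{|G|}(R)$ has $P\iff R$ has $P$ directly, so no separate Morita argument or case-by-case verification is needed.
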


\subsection{Algebraic K-Theory of Rings}

The basic objects studied in $K$-Theory are the projective modules over a ring $R$.  The abelian groups $K_{i}(R)$ are homotopy groups which are associated with the  group of invertible matrices over the ring $R$.

For the basic terminology that will be use in the sequel we refer the reader to \cite[Chapter 3]{Rosenberg}. For any ring $R$, the group $K_{-1}(R)$ is defined to be the cokernel of the natural map
$$K_{0}(R[t])\oplus K_{0}(R[t^{-1}])\rightarrow K_{0}(R[t,t^{-1}]).$$
By \cite[Theorem 3.30]{Antiau} every regular coherent ring $R$ has $K_{-1}(R)=0$. Here are some elementary consequences of that.

\begin{coro} Let $R$ be a left coherent ring and let $G$ be a finite group whose order is invertible in $R$.

\begin{enumerate}
\item If $R$ is regular, then $K_{-1}(RG)=0.$
\item If $R$ has finite  weak dimension, then $K_{-1}(RG)=0$.
\item If every $n$-cotorsion $R$-module has weak dimension at most $n$, then  $K_{-1}(RG)=0$.
\end{enumerate}
\end{coro}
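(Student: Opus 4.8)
The plan is to deduce all three statements from a single input, namely \cite[Theorem~3.30]{Antiau}: every regular coherent ring $R$ has $K_{-1}(R)=0$. Hence it suffices, in each of the three cases, to show that the skew group ring $RG$ is a left regular coherent ring, and then quote that theorem. Throughout I would use that, since the order of $G$ is invertible in $R$, the ring $RG$ is a separable extension over $R$ (the case $H=\{1\}$ of the situation recalled in Section~1), so that the ``moreover'' part of Proposition~\ref{coherenteregular} is available with $H$ the trivial subgroup.

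First I would treat (1). Here $R$ is assumed regular and left coherent, i.e. $R$ is a left regular coherent ring, so Proposition~\ref{coherenteregular} (with $H=\{1\}$, using separability) gives at once that $RG$ is a left regular coherent ring, whence $K_{-1}(RG)=0$ by \cite[Theorem~3.30]{Antiau}.

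Next, (2) reduces to (1): if $R$ is left coherent and has finite weak dimension, then $R$ is regular, since a coherent ring of finite weak dimension is regular; so $R$ is left regular coherent and the previous paragraph applies. (If one prefers a direct argument, Proposition~\ref{Coherent} shows $RG$ is left coherent and Lemma~\ref{df}, together with separability, bounds the weak global dimension of $RG$ by that of $R$, so $RG$ is again left regular coherent.) Finally, (3) is immediate: its hypothesis is exactly the hypothesis of the corollary preceding this one, whose conclusion is precisely that $RG$ is a left regular coherent ring, so \cite[Theorem~3.30]{Antiau} applies once more.

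I expect the only delicate point — and the one I would double-check rather than a genuine obstacle — to be the matching of conventions: one must make sure that ``left regular coherent ring'' as produced by Proposition~\ref{coherenteregular} (left coherent, together with every finitely presented module having finite projective dimension, via Quentel's theorem \cite{quentel}) is exactly the class of rings to which \cite[Theorem~3.30]{Antiau} applies. Since for coherent rings the one-sided finitely-generated-ideal formulation and the finitely-presented-module formulation of regularity coincide, this is a matter of bookkeeping; I do not anticipate any substantive difficulty, as all the ring-theoretic content has already been established in Propositions~\ref{Coherent} and~\ref{coherenteregular} and in the corollary immediately preceding.
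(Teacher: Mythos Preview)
Your proposal is correct and matches the paper's approach: the paper states this corollary as ``some elementary consequences'' of \cite[Theorem~3.30]{Antiau} without writing out a proof, and your argument supplies precisely the intended chain --- Proposition~\ref{coherenteregular} (with $H=\{1\}$ and separability) for (1), the remark that coherent rings of finite weak dimension are regular for (2), and the corollary at the end of Section~6 for (3). One small caveat: the result you call ``the corollary immediately preceding'' is not positionally adjacent (it sits at the end of Section~6, while the present statement is in Section~8.3), so in a write-up you should cite it by number rather than by proximity.
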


Recall that a subring $R$ of a field $k$ is called a valuation ring of $k$ if for all $x\in k$, either $x$ or $x^{-1}$ is an element of $R$. By \cite[Proposition 2.1]{Antiau1} every valuation ring is a coherent ring of weak dimension at most $1$. 

\begin{coro}
Let $R$ be a valuation ring and let $G$ be a finite group whose order is invertible in $R$, then $K_{-1}(RG)=0$.
\end{coro}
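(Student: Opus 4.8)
The plan is to reduce the assertion to the regular coherent case, which is already handled just above. First I would apply \cite[Proposition 2.1]{Antiau1} to record that the valuation ring $R$ is coherent and has $\wdR(R)\leq 1$; in particular $R$ is a left coherent ring of finite weak dimension. As noted earlier in the paper, a coherent ring of finite weak dimension is regular, so $R$ is a left regular coherent ring.

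Next I would use that $|G|$ is invertible in $R$, which (as recalled in the subsection on separable extensions) implies that the skew group ring $RG$ is a separable extension over $R$. Taking $H$ to be the trivial subgroup in Proposition~\ref{coherenteregular}, we conclude that $RG$ is again a left regular coherent ring. Finally, \cite[Theorem 3.30]{Antiau} gives $K_{-1}(S)=0$ for every regular coherent ring $S$, so $K_{-1}(RG)=0$. Alternatively, once one knows that $R$ is left coherent with finite weak dimension, the conclusion follows immediately from part~(2) of the preceding corollary; I would present it this shorter way.

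Since the proof is essentially a concatenation of already-established facts, I do not expect a genuine obstacle. The only point deserving a word of care is terminological: the relevant notion of \emph{regularity} here is that every finitely generated ideal has finite projective dimension (together with coherence), which is exactly the hypothesis under which \cite{Antiau} proves the vanishing of $K_{-1}$, and it must not be conflated with von Neumann regularity — a distinction the paper has already flagged.
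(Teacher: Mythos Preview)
Your proposal is correct and matches the paper's intended argument: the paper records just before this corollary that valuation rings are coherent of weak dimension at most $1$ (via \cite[Proposition~2.1]{Antiau1}), and then the result follows immediately from part~(2) of the preceding corollary---precisely the ``shorter way'' you describe. Your longer route through Proposition~\ref{coherenteregular} and \cite[Theorem~3.30]{Antiau} simply unpacks what is behind that preceding corollary, so it is the same argument spelled out in more detail.
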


Using item 7 of Proposition \ref{smash} we obtain:\\

\begin{coro}
If $R$ is a left regular coherent ring graded by a finite group $G$ then $K_{-1}(R\#G)=0$.
\end{coro}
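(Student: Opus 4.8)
The plan is to reduce the statement about the smash product $R\#G$ to the already established facts about skew group rings. First I would observe that since $R$ is graded by the finite group $G$, the group $G$ acts on the smash product $R\#G$ (this is the Cohen--Montgomery duality alluded to in the paper via \cite{CM}), and by \cite[Theorem 4.10]{Quinn} the skew group ring $(R\#G)G$ is isomorphic to the matrix ring $\mathcal{M}_{|G|}(R)$. The matrix ring $\mathcal{M}_{|G|}(R)$ is a separable extension over $R$, so by item 7 of Proposition \ref{smash} applied with the trivial subgroup, if $R$ is left regular coherent then so is $(R\#G)G \cong \mathcal{M}_{|G|}(R)$, and moreover the converse holds; and then by the same Proposition \ref{smash}, item 7, $R\#G$ is also left regular coherent (it sits between $R$ and $(R\#G)G$ via the induction-restriction machinery, exactly as in the proof of Proposition \ref{smash}).

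Next I would invoke the $K$-theoretic input: by \cite[Theorem 3.30]{Antiau}, every regular coherent ring has vanishing $K_{-1}$. Applying this to the ring $R\#G$, which we have just shown is left regular coherent, we conclude $K_{-1}(R\#G)=0$.

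The only mild subtlety — and the step I expect to be the main obstacle — is making sure that ``regular coherent'' as used in \cite{Antiau} matches the left-sided notion produced by Proposition \ref{smash}, and that the vanishing of $K_{-1}$ does not secretly require a two-sided hypothesis. I would address this by noting that Proposition \ref{Coherent} gives that $R\#G$ is coherent on one side iff it is coherent on the other (so coherence is unambiguous), and that regularity together with coherence is the precise hypothesis of \cite[Theorem 3.30]{Antiau}; hence no genuine obstruction remains. The argument is therefore essentially a two-line chain: $R$ left regular coherent $\Rightarrow$ $R\#G$ left regular coherent (by Proposition \ref{smash}(7)) $\Rightarrow$ $K_{-1}(R\#G)=0$ (by \cite[Theorem 3.30]{Antiau}).
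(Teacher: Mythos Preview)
Your core argument is correct and matches the paper's exactly: the paper simply invokes Proposition~\ref{smash}(7) to conclude that $R\#G$ is left regular coherent, and then applies \cite[Theorem 3.30]{Antiau} to get $K_{-1}(R\#G)=0$. Your first paragraph needlessly re-derives the content of Proposition~\ref{smash}(7), but that is harmless elaboration. One small correction to your aside on the subtlety: Proposition~\ref{Coherent} does \emph{not} say that a single ring is left coherent iff right coherent---it only transfers one-sided coherence between $RG$ and $RH$---so it does not resolve the left/right issue you raise; the paper itself simply ignores this point.
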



%
%
%


\end{document}